\def \version {01-06-2021}
\newtheorem{theorem}{Theorem}[section]
\newtheorem{lemma}[theorem]{Lemma}
\newtheorem{corollary}[theorem]{Corollary}
\newtheorem{fact}[theorem]{Fact}
\def \cay {\mbox{\rm Cay}}
\newenvironment{proof}[1][Proof]{\begin{trivlist}
\item[\hskip \labelsep {\bfseries #1\,}]}{\qed \end{trivlist}}
\newenvironment{remark}[1][Remark]{\begin{trivlist}
\item[\hskip \labelsep {\bfseries #1}]}{\end{trivlist}}
\newcommand{\sci}{{\chi_s'}}
\newcommand{\qed}{\hfill$\square$}
\title{Strong edge coloring of Cayley graphs and some product graphs}
\author{Suresh Dara$^1$, Suchismita Mishra$^2$, Narayanan Narayanan$^3$, Zsolt Tuza$^4$}
\date{
    \begin{small}
    $^1$Department of Mathematics, School of Advanced Sciences, VIT Bhopal University,\\ Kothri Kalan, Sehore-466114, India\\
    $2$ Advanced Computing and Microelectronics Unit, Indian Statistical Institute, Kolkata-700108, India\\
    $^3$Department of Mathematics, Indian Institute of Technology Madras, Chennai-600036, India.\\
    $^3$Alfr\'ed R\'enyi Institute of Mathematics, Budapest \& University of Pannonia, Veszpr\'em, Hungary.\\
    \end{small}
  \date{\small Latest update on \version}
}
\begin{document} 
\maketitle

\begin{abstract}
A strong edge coloring of a graph $G$ is a proper edge coloring of $G$ such that every color class is an induced matching. The minimum number of colors required is termed the strong chromatic index. In this paper we determine the exact value of the strong chromatic index of all unitary Cayley graphs. Our investigations reveal an underlying product structure from which  the unitary Cayley graphs emerge. We then go on to give tight bounds for the strong chromatic index of the Cartesian product of two trees, including an exact formula for the product in the case of  stars. Further, we give bounds for the strong chromatic index of the product of a tree with a cycle. For any tree, those bounds may differ from the actual value only by not more than a small additive constant (at most 2 for even cycles and at most 5 for odd cycles), moreover they yield the exact value when the length of the cycle is divisible by $4$.
\end{abstract}

\section{Introduction}
Throughout the paper, an edge joining vertices $u$ and $v$ is denoted by $(u,v)$. 
Let $G$ be a simple, finite, undirected graph. A {\it proper edge coloring} is a map $c$ from the edge set of $G$ to a set of distinct colors such that for any two edges $(u,v)$ and $(v,w)$, $c((u,v)) \neq c((v,w))$.
In addition, if  $c((u,v)) \neq c((w,x))$ whenever $(v,w)$ is an edge, then $c$ is called a {\it strong edge coloring}. 
 That is, a strong edge coloring is a proper edge coloring in which the vertex set of every color class induces a matching.
 The {\it strong chromatic index} of $G$, denoted by $\sci(G)$, is the minimum number of colors needed for any strong edge
coloring of $G$. 

There are several ways to look at this type of coloring.
Namely, the following conditions are equivalent:
 (a) each color class is an induced matching,
 (b) each $P_4\subset G$ is 3-edge-colored,
 (c) the square $(L(G))^2$ of the line graph of $G$ is properly vertex-colored,
 (d) the coloring is 1-intersection edge coloring\footnote{A $k$-intersection edge coloring,
   introduced in \cite{MutNarSub2009intersect}, is a proper edge coloring such that,
   for any two adjacent vertices, at most $k$ colors are incident with both.} of $G$.
The notion was introduced by Fouquet and Jolivet \cite{fouquet1983first} with
 the intention to represent conflict-free channel assignment in some radio networks.

There are many interesting conjectures on this problem.
Among them the most famous one is Erd\H os and Ne\v{s}et\v{r}il's conjecture,
 which states that the strong chromatic index of any graph $G$ is at most $(5\Delta^2 -2\Delta +1)/4$ if $\Delta$ is odd, and at most $5\Delta^2/4$ if $\Delta$ is even.
Here $\Delta=\Delta(G)$ denotes the maximum degree of the graph $G$.
As mentioned in \cite{faudree1989induced}, the conjecture was raised at
 the end of 1985; the first publication proposing the bound $5\Delta^2/4$
 seems to be \cite{erdos1988probl}.
If the bounds are valid, then they are tight, as shown by the graph obtained
 from the 5-cycle by substituting independent sets of size $\Delta/2$ into
 its vertices if $\Delta$ is even (inserting a complete bipartite graph between
 any two consecutive sets along the cycle), or two consecutive sets of size
  $(\Delta+1)/2$ and three others of size $(\Delta-1)/2$ if $\Delta$ is odd.

Since there are well over a hundred papers dealing with the strong chromatic index,
 here we necessarily are limited to a partial survey only.
The conjecture is proved by Chung et al.\ \cite{chung1990-2K2free} for all $\Delta$
 for the restricted class of graphs in which any two disjoint edges are joined
  by an edge (this particular case was conjectured already in 1983 by Bermond et al.\
 \cite{bermond1983bcc}, also raised independently in \cite{erdos1988probl}).

For small $\Delta$, Andersen \cite{andersen1992strong} and Hor\'ak et al.\
 \cite{horak1990cubic} showed that every cubic graph $G$ satisfies $\sci(G) \leq 10$.
(The case of non-regular sub-cubic graphs is easy, as noted in \cite{faudree1990sq}).
The case of $\Delta=4$ is already complicated, the upper bound of $20$ (respectively $19,18,17,16$) is known
 to be valid only under the further assumption that the maximum average degree
 is at most $51/13$ (respectively $15/4, 18/5, 7/2,61/18$) \cite{lv2018strong}.
These are improvements of the estimates in \cite{bensmail2015maxaverdeg}, where
 e.g.\ the sufficiency of maximum average degree at most $19/5$ for 20-colorability
 was proved.
The currently best result without average-degree restrictions states $\sci(G)\leq 21$, achieved only recently by Huang et al.\ \cite{huang2018strong}.
Since each edge is strongly independent from all but at most 24 other edges,
the upper bound $25$ is very easy.
Improving this bound to $24$, follows in one step from a general theorem of
 \cite{chung1990-2K2free} quoted above, applying Brooks's theorem.
This was further improved to $23$ by Hor\'ak \cite{horak1990maxdeg4}, to $22$ by Cranston \cite{cranston2006strong}, and finally to $21$ by Huang et al.\ \cite{huang2018strong}.

Bruhn and Joos \cite{bruhn2015stronger} proved that $\sci(G) \leq 1.93 \Delta^2(G)$, for graphs of sufficiently large maximum degree.
This improves an old bound of $\sci(G) \leq 1.998 \Delta^2(G)$ proved by Molloy and Reed \cite{molloy1997bound}. This bound is further improved for graphs with sufficiently large maximum degree. Hurley, Verclos and Kang showed that the strong chromatic index is atmost $1.772\Delta^2(G)$, for any graph G with sufficiently large maximum
degree $\Delta(G)$ \cite{hurley2021improved}. 

In 1989, Faudree et al.\ \cite{faudree1989induced} conjectured that every bipartite graph $G$ satisfies $\sci(G) \leq (\Delta(G))^2$.  
Brualdi and Quinn Massey strengthened this conjecture to state that if $G$ is a bipartite graph with bipartition $(A,B)$ and $\Delta(A)$ and $\Delta(B)$ are the maximum degrees
of the vertices in $A$ and $B$ respectively, then  $\sci(G) \leq
\Delta(A)\Delta(B)$ holds \cite{brualdi1993incidence}. 
Later, Nakprasit  \cite{nakprasit2008note} proved
that this conjecture is valid when $\Delta(A)=2$.  That is, for a $(2,\Delta)$-bipartite graph there is a strong edge
coloring that uses at most $2\Delta$ colors.  In 2017, Huang et al.\ \cite{huang2017strong} showed
that if $G$ is a $(3,\Delta)$-bipartite graph, then
 $\sci(G)\leq 3\Delta$. 
Bipartite graphs are complex also in the algorithmic sense:
 Mahdian \cite{mahdian2002computational} showed that determining the exact value of the
strong chromatic index is NP-hard even for bipartite graphs with girth at
least $g$, for any natural number $g$.

It is known that every planar graph admits a $4\Delta +4$ strong edge coloring \cite{faudree1990sq}. 
Moreover, $3\Delta+5$ colors are sufficient if the planar graph has girth $6$, and if it has girth at least $7$ then even $3\Delta$ colors are sufficient \cite{hudak2014strong}. Furthermore for outerplanar graphs  an exact formula can be given, as shown in the following theorem.

\vbox{
\begin{theorem}\cite{narayanan2015further}
Let $G$ be an outerplanar graph. Then $\sci(G)= \max \{\max \limits_{uv \in E}$ $d(u)+d(v)-1, \max \limits_{H \in \mathcal{P}} \sci(H)\}$, where $\mathcal{P}$ is the set of all puffer subgraphs of $G$. Moreover, if $G$ is bipartite, then $\sci(G)$ is either $\max_{uv \in E} d(u)+d(v)-1$ or $\max_{uv \in E} d(u)+d(v)$.
\end{theorem}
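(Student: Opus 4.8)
The plan is to prove the two inequalities separately; write $L$ for the claimed maximum of $\max_{uv\in E}d(u)+d(v)-1$ and $\max_{H\in\mathcal P}\sci(H)$. The bound $\sci(G)\ge L$ is immediate. Fix an edge $uv$: every edge incident with $u$ together with every edge incident with $v$ lies within distance one of every other such edge (two of them are either adjacent, or are joined through the edge $uv$), so these edges form a clique in $(L(G))^2$ and must receive pairwise distinct colors; there are $d(u)+d(v)-1$ of them, which gives the first term. Since $\sci$ is monotone under taking subgraphs, $\sci(G)\ge\sci(H)$ for every puffer $H$, which gives the second term. The whole difficulty is the matching upper bound $\sci(G)\le L$, which I would prove by induction on $|E(G)|$ using the ``tree of faces'' structure of outerplanar graphs.

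\textbf{Reduction for the upper bound.} Take $G$ with $|E(G)|$ minimal subject to $\sci(G)>L$. First I would handle cut vertices: if $v$ separates $G$ into blocks, color the blocks one at a time, and when attaching a new block $B$ at $v$ the only previously used colors that $B$ must avoid are those on edges at $v$ and on edges incident with already-colored neighbours of $v$, a purely local obstruction dominated by $d(v)+d(w)-1$ for a suitable neighbour $w$; so the inductive coloring of $B$ can be permuted to fit. Hence we may assume $G$ is $2$-connected, so it has a Hamiltonian outer cycle whose chords cut the disc into inner faces that form a tree under the ``share a chord'' relation. Pick a leaf face $F$: it meets the rest of $G$ in a single chord $xy$, and the remainder of $\partial F$ is a path $P$ from $x$ to $y$ along the outer cycle whose internal vertices all have degree $2$. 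Delete the interior of $P$ (and, if $F$ is a triangle with both non-chord edges on the outer cycle, delete the chord $xy$), color the resulting smaller outerplanar graph with $L$ colors by minimality, and attempt to extend the coloring over $F$.

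\textbf{The extension step --- the main obstacle.} Extending means coloring the short path $P$ of new edges so that the result is still a strong edge coloring; the constraints are of only two kinds, namely colors forced by edges at the two attachment vertices $x$ and $y$, and colors forced along the already-colored boundary between $x$ and $y$ (or on the chord $xy$). The heart of the proof is a counting argument showing that if $L$ colors are \emph{not} enough to carry out this extension, then $F$ together with the edges at $x$, the edges at $y$, and the colored path realizing the conflicts must be --- up to a bounded list of possibilities depending only on $|V(F)|$, $d(x)$ and $d(y)$ --- one of the puffer subgraphs, for which $L\ge\sci(H)$ already exceeds the number of colors actually needed here, a contradiction. Carrying out this enumeration of failure configurations and checking that each one is a puffer is where essentially all the work lies, and I expect it to be the main obstacle: it is a finite but delicate case analysis on the length of $F$, the degrees of $x$ and $y$, and the pattern of colors on the boundary, rather than one clean argument.

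\textbf{The bipartite dichotomy.} When $G$ is bipartite it is triangle-free, so every inner face is an even cycle and in particular $|V(F)|\ge 4$. Re-running the extension argument under this restriction, triangle-freeness prevents the local conflicts from stacking up to their full extent, and one checks that any failure to extend using $\max_{uv}d(u)+d(v)-1$ colors can be repaired with just one additional color; hence $\sci(G)$ equals either $\max_{uv}d(u)+d(v)-1$ or one more than that. Which of the two values occurs is then a local condition: the surplus color is forced precisely by $4$-faces whose boundary edge $xy$ sits between two vertices of large degree, the prototype being $C_4$ itself, which already requires $4=3+1$ colors.
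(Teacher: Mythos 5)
First, a point of orientation: the paper you are working from does not prove this statement at all — it is quoted verbatim from the cited reference \cite{narayanan2015further} — so there is no in-paper argument to measure your proposal against; it has to stand on its own. Your lower bound is fine (the $d(u)+d(v)-1$ edges meeting a fixed edge $uv$ are pairwise within distance one, and $\sci$ is monotone under taking subgraphs), and the overall plan — reduce to the $2$-connected case, use the tree of inner faces of an outerplanar embedding, peel off a leaf face $F$ and extend the coloring — is the natural one. But there is a genuine gap exactly where you admit "essentially all the work lies": the extension step is a promise of an argument, not an argument. Moreover, the way you propose to close it is logically off. If the obstruction to extending the partial coloring over $F$ is identified with a puffer subgraph $H$, the inequality $L\ge\sci(H)$ only says that $H$ admits \emph{some} strong edge coloring with $L$ colors; it says nothing about whether the particular partial coloring inherited from the smaller graph extends over the new edges of $F$. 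To make the induction work you would need either an extendability lemma for the obstruction configurations, or a structural statement that a failed extension forces $G$ to contain a puffer whose $\sci$ matches the number of colors you actually need — neither is formulated, let alone proved, and the "finite but delicate case analysis" that would do this is precisely the content of the theorem.

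Two smaller but real defects. The cut-vertex gluing ("the inductive coloring of $B$ can be permuted to fit") also needs proof: every edge of the new block $B$ incident with $v$ \emph{or with a neighbour of $v$ in $B$} must avoid the colors already used at $v$ in the other blocks, and verifying that $L$ colors survive this exclusion is a genuine computation involving $d(v)$ split across blocks, not something that follows from locality alone. And the bipartite dichotomy is asserted rather than derived; the one concrete datum you give, $\sci(C_4)=4=\max_{uv}(d(u)+d(v))$, is correct, but "triangle-freeness prevents the local conflicts from stacking up" is not a proof that one additional color always repairs a failed extension, nor does it rule out needing two. As it stands the proposal is a plausible road map whose central lemma is missing.
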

}

The exact values for the puffer graphs are obtained in \cite{chang2015strong}.

A Halin graph is a plane graph constructed from a tree $T$ without vertices of degree two by connecting all leaves through a cycle $C$. Let $G=T \cup C$ be a Halin graph. Lai et al.\ \cite{lai2012strong} proved that $\sci(G) \leq \sci(T)+3$, provided $G$ is different from some special graphs. 
Also it is known that every Halin graph $G$ with $\Delta(G) \geq 4$ satisfies $\sci(G) \leq 2\Delta(G)+1$ \cite{hu2018upper}.
Moreover, apart from two exceptions, cubic Halin graphs have $\chi'_s(G) \le 7$
 \cite{lih2012cubicHalin}.

We first discuss the strong edge coloring of the class of graphs called unitary Cayley graph. Let $U_n$ be the set of all units of $\mathbb{Z}_n$, that is $U_n = \{ m\in \mathbb{Z}_n\mid \gcd(m,n)=1\}$.
For any natural number the Cayley graph $\cay(\mathbb{Z}_n,U_n)$ is called a {\em unitary Cayley graph} and 
is denoted by $X_n$. Now for any $m \in \mathbb{Z}_n$, $\gcd(m,n) = 1$ if and only if $\gcd(-m,n)=1$, and
every element of $U_n$ generates $\mathbb{Z}_n$. Hence, $X_n$ is an undirected connected (and also Hamiltonian) graph.

The structure and various properties of unitary Cayley graphs
have been studied in the literature (see \cite{berrizbeitia}, \cite{klotz}). For a natural number $n$, $X_n$
is a $\phi(n)$-regular graph, where $\phi$ is Euler's phi function. 
It is known that a unitary Cayley graph is bipartite if and only if $n$ is even \cite{dejter}.
Akhtar et al.\ \cite{Akhtar} showed that the chromatic index 
of $X_n$ is $\phi(n)+1$ if $n$ is odd, and it is $\phi(n)$ otherwise. 

Here we  determine exactly the strong chromatic index of
Cayley graphs $X_n$, for all $n$. Namely, we prove that if a
given natural number $n$ has $k$ distinct prime factors
in its prime factor decomposition, then $\sci(X_n) = |E|/{2^{k-1}}$.

Besides the unitary Cayley graphs we also discuss about the Cartesian
product of graphs. Cartesian product of graphs is an important notion
in the theory of graph products, where the structure of the factors
(graphs) appears as an induced subgraph. 
For several important results and properties of the Cartesian product see the book \cite{imrich2008topics}.
Here we concentrate on the Cartesian product of certain classes of graphs.
The first result in the literature concerning strong chromatic index under the
 product operation was given in \cite{faudree1990sq} where the exact value for the
 $n$-dimensional hypercube was proven to be $\sci(Q_n)=2n$ for all $n\geq 2$.
The systematic study of $\sci$ on various types of graph products was initiated
 by Togni in \cite{togni2007strong}; in particular, he determined formulas for the
  Cartesian products of paths and cycles.
Complexity issues and general inequalities for some product types have also been
 presented by Chalermsook et al.\ in \cite{chalermsook2014product}.

Here we give both upper and lower bounds for the strong chromatic index of the
 Cartesian product of any two trees, and for the product of any tree with any cycle. 
If the length of a cycle $C$ is a multiple of $4$ then our results give the exact value
 of the strong chromatic index of the product of $C$ with an arbitrary tree.

\section*{Definitions and notation}

Following the standard notation, $C_{\ell}$ denotes the cycle graph of length $\ell$. We write $u \sim v$ to say $u$ is adjacent to $v$. 

For any edge $(u,v)$ in a graph $G$, define the edge degree $d'(u,v)$ of $(u,v)$ to be the number of edges incident 
either to $u$ or to $v$. 
That is, $d'(u,v)=d(u)+d(v)-1$.
The maximum edge degree of a graph, denoted by $\Delta'(G)$, is the maximum of all the edge degrees.

Recall that the \emph{Cartesian product} of any two graphs $G$ and $H$, denoted by $G \Box H$, is the graph with vertex set $\{ a:u\mid  a \in V(G)$ and $u \in V(H)\}$, where two vertices $a:u$ and $b:v$ are adjacent if either $a=b$ and $u$ is adjacent to $v$ in $H$,
or $u=v$ and $a$ is adjacent to $b$ in $G$.

For any vertex $h$ of $H$ define a $G$-{\it fiber} $G \times \{h\}$ to be the graph with vertex set $\{a:h\mid a \in V(G)\}$ and edge set $\{(a:h,b:h)\mid  (a,b) \in E(G)\}$. Similarly define $H$-fibers.
Note that we can write $G \Box H$ as $(G \times V(H)) \cup (V(G) \times H)$.

Another type of graph product is the \emph{categorical product} or \emph{direct product} of $G$ and $H$, denoted by $G\times H$.
It also has the vertex set $\{ a:u\mid  a \in V(G)$ and $u \in
V(H)\}$, but in this product vertices $a:u$ and $b:v$ are adjacent if
$a \sim b$ in $G$ and $u \sim v$ in $H$.
Notation is expressive for both kinds of products, indicating that $K_2\Box K_2\cong C_4$  and $K_2\times K_2\cong 2K_2$.

\section{Strong edge coloring of unitary Cayley graphs}

In this section we determine the strong chromatic index of all unitary Cayley graphs.

From its definition, it follows that the Cayley graph $X_n$ is a
$\phi(n)$-regular graph of order $n$. Therefore, the size (number of
edges) of $X_n$ is $n\frac{\phi(n)}{2}$.
Note that if $n$ has prime divisors $p_1,\dots,p_k$ then $\phi(n)=n\prod_{i=1}^k \left( 1 - \frac{1}{p_i}\right)$.
It will also be convenient to introduce the notation $n'=\frac{n}{\prod_{i=1}^k p_i}$,
 and to write $n''=\prod_{i=1}^k p_i$.

Our main result is the following formula.

\begin{theorem}\label{upperbound}
 Let\/ $n$ be a natural number with the prime factorization\/
 $p_1^{r_1}p_2^{r_2}\dots p_k^{r_k}$. Then\/
 $\sci(X_n) = \frac{ \vert E\vert}{2^{k-1}} = n\frac{\phi(n)}{2^{k}}$.
\end{theorem}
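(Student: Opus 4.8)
The plan is to establish the two bounds separately. For the lower bound $\sci(X_n) \ge n\phi(n)/2^k$, I would exhibit a set of $n\phi(n)/2^k$ edges that must all receive distinct colors, i.e.\ an \emph{induced} subgraph in which every pair of edges is within distance one (a subgraph whose line graph squared is complete). Equivalently, it suffices to find a clique of that size in $(L(X_n))^2$. The natural candidate comes from the product structure advertised in the abstract: when $n = p_1^{r_1}\cdots p_k^{r_k}$, the Chinese Remainder Theorem gives $X_n \cong X_{p_1^{r_1}} \times \cdots \times X_{p_k^{r_k}}$ (categorical product), and each prime-power factor $X_{p^r}$ is a complete multipartite-type graph — in fact $X_{p^r}$ is the "blow-up" of $K_p$ by independent sets of size $p^{r-1}$, equivalently $p$ disjoint copies glued so that $u\sim v$ iff $u\not\equiv v \pmod p$. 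In such a graph a carefully chosen neighborhood-type set of edges is mutually at distance $\le 1$; taking the product over the $k$ factors multiplies the counts, and a bookkeeping with $\phi$ yields exactly $n\phi(n)/2^k$. I would phrase the lower bound as: the edges incident to a fixed vertex together with the edges of a perfect matching on its neighborhood (when $n$ is even) or a near-construction (when $n$ is odd) already realize the bound after exploiting the regularity $\deg = \phi(n)$ and the multiplicativity.

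For the upper bound $\sci(X_n) \le n\phi(n)/2^k$ I would give an explicit coloring, again using the decomposition $X_n \cong X_{p_1^{r_1}} \times \cdots \times X_{p_k^{r_k}}$. The key observation is that a strong edge coloring of a categorical product can be built from strong-edge-type colorings of the factors: color an edge of the product by the tuple of "colors" induced on the $k$ coordinate edges, where for each factor $X_{p^r}$ we use an optimal strong edge coloring. For a single prime power, $X_{p^r}$ is $\phi(p^r)$-regular and "locally like $K_p$"; one checks directly that $\sci(X_{p^r}) = |E(X_{p^r})| = p^r\phi(p^r)/2$ when $p=2$, and for odd $p$ the graph $X_p \cong K_p$ has $\sci(K_p) = |E(K_p)|$ (every two edges of $K_p$ share a vertex or are joined by an edge), so each factor forces its full edge set as color classes of size one, \emph{except} that the two parities interact: the factor $2^{r_1}$ is the only even one, so the overall saving is $2^{k-1}$ rather than $2^k$ — precisely matching the $|E|/2^{k-1}$ in the statement. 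Concretely I would take the product coloring, verify properness and the induced-matching condition edge-by-edge through the product adjacency rule, and then count: $\prod_i \sci(X_{p_i^{r_i}})$ collapses to $n\phi(n)/2^{k}$ because exactly one factor (the power of $2$, if present) contributes a factor-of-two discount, and when $n$ is odd one instead uses that $n\phi(n)/2^{k}$ equals $\prod_i |E(K_{p_i})|$ appropriately scaled by the $p_i^{r_i-1}$ blow-up factors.

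The main obstacle I anticipate is making the product-of-colorings argument airtight: in a categorical product, two edges $(a{:}u, b{:}v)$ and $(a'{:}u', b'{:}v')$ can be within distance one via a path that zig-zags between coordinates, so the induced-matching condition for the product coloring does \emph{not} follow formally from the factors having strong edge colorings — one genuinely needs that each factor's coloring has the stronger property that any two edges lying in a common short path get different colors, which for $X_{p^r}$ holds only because its structure is so rigid (essentially a blown-up clique). So the real content is a careful case analysis of the possible "interaction patterns" of two edges in $X_{p^r}$ showing that the distance-$\le 1$ relation there coincides with what the chosen coloring separates, and then checking that distance $\le 1$ in the product implies distance $\le 1$ in at least one coordinate where both edges are "active." Handling the parity of $n$ uniformly (the lone even factor versus the all-odd case) is the other place where care is needed, and I would isolate it as a short lemma computing $\sci(X_{p^r})$ for a single prime power before assembling the product.
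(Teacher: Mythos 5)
Your upper bound is, in substance, the paper's own construction. Under the CRT isomorphism $X_n\cong X_{p_1^{r_1}}\times\cdots\times X_{p_k^{r_k}}$ each factor is a blown-up clique with no induced $2K_2$, so its optimal strong colouring gives every edge its own colour; your ``product colouring'' therefore colours an edge $\{u,v\}$ of $X_n$ by the $k$-tuple of unordered pairs $\{u_i,v_i\}$, and the colour class of $\{u,v\}$ is exactly the perfect matching induced on the $2^k$ vertices $\{x:\ x_i\in\{u_i,v_i\}\ \mbox{for all } i\}$. Checking that this matching is induced (two such vertices are adjacent iff they differ in \emph{every} coordinate, i.e.\ are complementary choices) is the whole verification, and it disposes of your ``zig-zag'' worry. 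One correction: no parity case distinction is needed and the prime $2$ plays no special role; the count $\prod_i|E(X_{p_i^{r_i}})|=\prod_i p_i^{r_i}\phi(p_i^{r_i})/2=n\phi(n)/2^k=|E|/2^{k-1}$ holds uniformly.

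The lower bound, however, has a genuine gap. You propose to exhibit $|E|/2^{k-1}$ pairwise conflicting edges, i.e.\ a clique of that size in $(L(X_n))^2$, but your candidate --- the star at a vertex together with a matching on its neighbourhood --- has at most $\tfrac32\phi(n)$ edges, which falls far short of $n\phi(n)/2^k$ already for $n=12$ (where $6<12$) or $n=15$ (where $12<30$); worse, those edges need not pairwise conflict, since the neighbourhood of a vertex of $X_{15}$ induces $K_{4,4}$ minus a perfect matching, which contains induced $2K_2$'s. More fundamentally, a clique only certifies $\sci(X_n)\ge\omega\bigl((L(X_n))^2\bigr)$, and nothing in your sketch guarantees that this clique number reaches $|E|/2^{k-1}$ when $k\ge 3$. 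The paper's lower bound is of a different kind: it proves that \emph{every} induced matching of $X_n$ has at most $2^{k-1}$ edges (via Alon's skew set-pair inequality, applied to the $k$-sets of residues of the endpoints modulo $p_1,\dots,p_k$), so every colour class has at most $2^{k-1}$ edges and at least $|E|/2^{k-1}$ classes are needed. Some such upper bound on the size of induced matchings is the missing key step, and your proposal never addresses it.
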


\begin{proof}
We  prove that $\frac{ \vert E\vert}{2^{k-1}}$ is both an upper bound and a lower bound on $\sci(X_n)$.
First let us make some observations on the structure of $X_n$.
Each $v\in\mathbb{Z}_n$ can be classified according to its residues modulo the prime factors, assigning with $v$ the $k$-tuple
 $$
   \langle v \rangle := (  \,v\!\!\!\!\mod p_1 \, , v\!\!\!\!\mod p_2 \, ,
    \, \dots \, , v\!\!\!\!\mod p_k \, ).
 $$
This assignment partitions the vertex set into $n''= p_1p_2...p_k$ classes, each class having cardinality $n' = p_1^{r_1-1}p_2^{r_2-1}...p_k^{r_k-1}$.
These are precisely the classes of false twins: if $\langle u \rangle = \langle v \rangle$, then $u$ and $v$ are not adjacent but they have exactly the same neighborhood.

As it can be seen directly from the definition, non-adjacency means that the two numbers in question are incongruent modulo each $p_i$.
In this way we can represent $X_n$ with a $k$-dimensional box, first taking the categorical product graph $K^*:=K_{p_1} \times K_{p_2} \times \cdots \times K_{p_k}$, and then substituting independent sets of size $n'$ (sets of false twins) into the product graph;
 that is, each edge of $K^*$ is enlarged to an induced copy of the complete bipartite graph $K_{n',n'}$. (This graph only rarely happens to be a product graph after
substitution.)

Let us note further that if $n=p^r$, i.e.\ in case $n$ has just one prime divisor,
 $X_n$ is the complete $p$-partite graph in which each vertex class has
 $p^{r-1}$ vertices.
Then no two edges of $X_n$ can form an induced matching, and
 the unique strong edge coloring assigns a distinct color to each edge.
For this reason,
  $$\sci(X_{p^r}) = \vert E(X_{p^r}) \vert = \frac{p^{2r-1}(p-1)}{2}$$
 clearly holds, and we only have to consider $k\geq 2$.

We first prove the upper bound given in the theorem.
\medskip

\noindent \underline{Proof of the upper bound:}

We show that the edge set of $X_n$ can be \emph{partitioned} into induced matchings of size $2^{k-1}$.
This clearly implies the inequality $\sci \leq \frac{ \vert E\vert}{2^{k-1}}$.

To this end, we  construct an edge partition for $K^*$, where  the
graph $K^*$ is actually just $X_{n''}$.
Consider any edge $(u,v)\in K_{p_1} \times \cdots \times K_{p_k}$.
Let us write $\langle u \rangle$ and $\langle v \rangle$ in the form
 $\langle u \rangle = (a_1,\dots,a_k)$ and $\langle v \rangle = (b_1,\dots,b_k)$.
Since $(u,v)$ is an edge, we have $a_i\neq b_i$ for all $1\le i\le k$.
Hence, the Cartesian product $\{a_1,b_1\} \times \cdots \times \{a_k,b_k\}$
 specifies a set, say $S$, of exactly $2^k$ vertices in $K^*$.
We claim that the subgraph induced by $S$ is a matching of size $2^{k-1}$.
Indeed, two vertices $(a'_1,\dots,a'_k),(b'_1,\dots,b'_k)\in S$ are adjacent
 if and only if $\{a'_i,b'_i\}=\{a_i,b_i\}$ holds for all $i$, as otherwise
 the corresponding two numbers would share a prime divisor.
Hence every $v\in S$ has one and only one neighbor in $S$; that is,
 $S$ induces a matching, which then necessarily has $|S|/2=2^{k-1}$ edges.
Note further that every edge induced by $S$ determines exactly the same set $S$,
 and so each edge of $K^*$ belongs to precisely one induced matching defined in this way.

Substituting independent sets of size $n'$ into the vertices of $K^*$, each edge
 gets replaced  by a subgraph isomorphic to $K_{n',n'}$. One such example is shown in Figure \ref{pic2}. Therefore each induced matching
 of size $2^{k-1}$ from $K^*$ becomes an induced subgraph isomorphic to
 $2^{k-1} K_{n',n'}$ in $X_n$.
Since the induced matchings defined in $K^*$ are mutually edge-disjoint,
 these copies of $2^{k-1} K_{n',n'}$ are edge-disjoint.
Obviously each of these subgraphs can be decomposed into $(n')^2$ edge-disjoint
 induced matchings of size $2^{k-1}$.
Consequently we obtain a required edge partition of $X_n$, and the upper bound
 $\sci \leq \frac{ \vert E\vert}{2^{k-1}}$ follows.

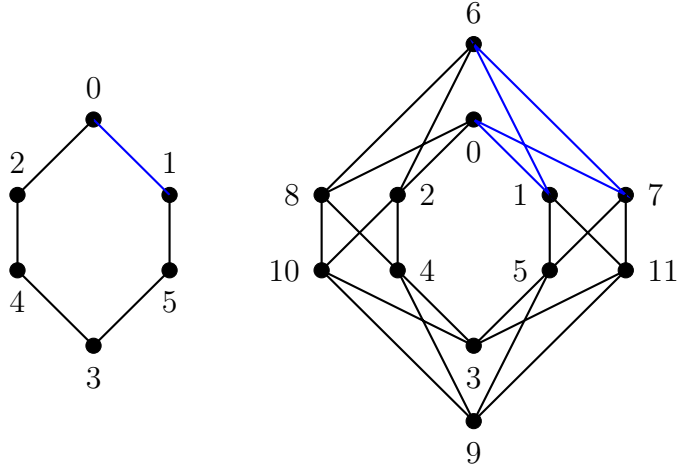
\begin{figure}[h]
\begin{center}
	
	\begin{tikzpicture}
		\draw[fill](0,0)circle[radius=0.1]node[black,above=4]{$0$};
		\draw[fill](1,-1)circle[radius=0.1]node[black,above=4]{$1$};
		\draw[fill](-1,-1)circle[radius=0.1]node[black,above=4]{$2$};
		\draw[fill](1,-2)circle[radius=0.1]node[black,below=4]{$5$};
		\draw[fill](-1,-2)circle[radius=0.1]node[black,below=4]{$4$};
		\draw[fill](0,-3)circle[radius=0.1]node[black,below=4]{$3$};
		
		\draw [blue, thick] (0,0)--(1,-1);
		\draw [black, thick] (1,-1)--(1,-2)--(0,-3)--(-1,-2)--(-1,-1)--(0,0);
		
		\draw[fill](5,0)circle[radius=0.1]node[black,below=4]{$0$};
		\draw[fill](6,-1)circle[radius=0.1]node[black,left=4]{$1$};
		\draw[fill](4,-1)circle[radius=0.1]node[black,right=4]{$2$};
		\draw[fill](6,-2)circle[radius=0.1]node[black,left=4]{$5$};
		\draw[fill](4,-2)circle[radius=0.1]node[black,right=4]{$4$};
		\draw[fill](5,-3)circle[radius=0.1]node[black,below=4]{$3$};
		
		\draw[fill](5,1)circle[radius=0.1]node[black,above=4]{$6$};
		\draw[fill](7,-1)circle[radius=0.1]node[black,right=4]{$7$};
		\draw[fill](3,-1)circle[radius=0.1]node[black,left=4]{$8$};
		\draw[fill](7,-2)circle[radius=0.1]node[black,right=4]{$11$};
		\draw[fill](3,-2)circle[radius=0.1]node[black,left=4]{$10$};
		\draw[fill](5,-4)circle[radius=0.1]node[black,below=4]{$9$};
		
		\draw [blue, thick] (5,0)--(6,-1)--(5,1)--(7,-1)--(5,0);
		\draw [black, thick] (6,-2)--(6,-1)--(7,-2)--(7,-1)--(6,-2);
		\draw [black, thick] (5,-3)--(6,-2)--(5,-4)--(7,-2)--(5,-3);
		\draw [black, thick] (5,-3)--(4,-2)--(5,-4)--(3,-2)--(5,-3);
		\draw [black, thick] (4,-1)--(4,-2)--(3,-1)--(3,-2)--(4,-1);
		\draw [black, thick] (4,-1)--(5,0)--(3,-1)--(5,1)--(4,-1);
	\end{tikzpicture}
	\caption{Substitution of an edge of $X_6$ in $X_{12}$} \label{pic2}	
\end{center}
\end{figure}	

\bigskip

In the proof of the lower bound we shall apply a particular case of the following
 theorem due to Alon \cite{Alon1985pairs}.
Originally the result was stated for pairs of \emph{$k$-tuples} of sets,
 we rewrite it with a somewhat simpler formalism dealing with pairs of sets.

\begin{lemma} \cite{Alon1985pairs}
  \label{setpairs}
Let\/ $Y_1,\dots,Y_k$ be mutually disjoint sets,\/
$Y=Y_1\cup\cdots\cup Y_k$. Moreover
 let\/ $s_1,\dots, s_k$ and\/ $t_1,\dots,t_k$ be positive integers.
Assume that\/
 $(A_1,B_1),\dots, (A_m,B_m)$ are pairs of sets with the following properties:
 \begin{itemize}
   \item $A_i\cup B_i\subseteq Y$ and\/ $A_i\cap B_i = \emptyset$ for all\/ $1\le i\le m$;
   \item $A_i\cap B_j \neq \emptyset$ for all\/ $1\le i<j\le m$;
   \item $|A_i\cap Y_\ell| \leq s_\ell$ and\/ $|B_i\cap Y_\ell| \leq t_\ell$ for all\/ $1\le i\le m$ and all\/ $1\le \ell\le k$.
 \end{itemize}
Then\/ $m\leq \prod_{\ell=1}^k {s_\ell+t_\ell \choose s_\ell}$.
\end{lemma}

Note that in case of $s_1=\ldots=s_k=t_1=\ldots=t_k=1$ the upper bound simply means $m\le 2^k$.

\bigskip

\noindent \underline{Proof of the lower bound:}

We  prove that the largest induced matchings in $X_n$ have no more than $2^{k-1}$ edges.  This clearly imply the lower bound $\sci \geq \frac{ \vert E\vert}{2^{k-1}}$.

Assume that the edges $(u_1,v_1),\dots,(u_h,v_h)$ form an induced matching.
We further  define $2h$ pairs of sets $(A_i,B_i)$ over an
underlying set $Y=Y_1\cup \cdots \cup Y_k$; two pairs of sets are to be defined for each edge.
For every $1\le\ell\le k$ we write $Y_\ell = \{y^\ell_0,y^\ell_1,\dots,y^\ell_{p_\ell-1}\}$;
 those vertices  represent the residue classes modulo $p_\ell$.

The ordered $k$-tuple $\langle v \rangle$ of integers allows us to associate a $k$-element subset $H(v)\subset Y$ with each vertex $v\in X_n$ as follows:
 $$
   H(v) = \{ y^1_{v\!\!\!\!\mod p_1} , y^2_{v\!\!\!\!\mod p_2} ,
     \dots, y^k_{v\!\!\!\!\mod p_k} \} \ .
 $$
Now, for every $1\le i\le h$ we set
 $$
   A_{2i-1} = H(u_i) , \quad B_{2i-1} = H(v_i) ,
    \quad A_{2i} = H(v_i) , \quad B_{2i} = H(u_i) \ .
 $$
Then the sets of the same index are disjoint, due to the adjacency of $u_i$ and $v_i$.
On the other hand, since $u_i$ is not adjacent to $u_j$ if $j\neq i$, the difference $u_i-u_j$ is divisible by some $p_\ell$, which implies that $A_{2i-1}$ --- as well as $B_{2i}$ --- meets both $B_{2j-1}$ and $A_{2j}$ inside $Y_\ell$.
Analogous consequences can be derived from the nonadjacencies
 $u_i\not\sim v_j$, $v_i\not\sim u_j$, and $v_i\not\sim v_j$.
It follows that the pairs $(A_1,B_1),\dots,(A_{2h},B_{2h})$ satisfy the conditions of Lemma \ref{setpairs}, with $s_\ell=t_\ell=1$ for all $1\le\ell\le k$.
Consequently $2h\le 2^k$ holds, so that every induced matching has at most $2^{k-1}$ edges, as claimed.
This completes the proof of the theorem.
\end{proof}

\section{Cartesian product of two trees}

In this section, we give both a lower and an upper bound for the strong chromatic index of the Cartesian product of two trees.
First we prove that twice the maximum degree is an upper bound for the strong chromatic index of the product of two trees. 
Recall the Brualdi and Quinn Massey's conjecture bound for the strong chromatic index of the bipartite graphs (the strong chromatic index of a $(A,B)$-partite graph is at most $\Delta(A)\Delta(B)$) \cite{brualdi1993incidence}. 
Note that the product of two trees is a bipartite graph. So our result implies the product of two trees satisfies the conjectured bound given by Brualdi and Quinn Massey.

\begin{theorem}\label{ub}
Let $T_1$ and $T_2$ be two trees. Then $ \sci(T_1 \Box T_2) \leq 2\Delta(T_1 \Box T_2)$. 
\end{theorem}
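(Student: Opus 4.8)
The plan is to exhibit an explicit strong edge coloring of $T_1 \Box T_2$ using at most $2\Delta$ colors, where $\Delta = \Delta(T_1\Box T_2) = \Delta(T_1)+\Delta(T_2)$. Root each tree $T_i$ at an arbitrary vertex; this gives each non-root vertex a unique parent, and lets us speak of the level (distance from the root) of every vertex. An edge of $T_1\Box T_2$ is either a $T_1$-edge inside some fiber $T_1\times\{h\}$, or a $T_2$-edge inside some fiber $\{a\}\times T_2$. The key structural fact I would use is that in a tree a proper coloring of the edges by ``the child endpoint'' already separates edges at distance $1$ in a controlled way: two tree-edges $e,e'$ conflict in the strong sense (i.e.\ must get different colors) only if their endpoints lie within distance $2$, which in a rooted tree means they share a vertex or their child-endpoints are siblings or in a parent–child relation. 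So the first step is to choose, for each tree $T_i$, a coloring $\gamma_i$ of its edges that is a \emph{strong-edge-coloring-friendly} labeling in the following sense: every non-root vertex $x$ receives a ``port number'' $\pi_i(x)$ so that siblings get distinct port numbers and a vertex and its parent get distinct port numbers; assigning the edge from $x$ to its parent the label $\pi_i(x)$ then gives each $T_i$ a strong edge coloring with $\Delta(T_i)+1$ colors (this is the classical fact that $\sci(T)\le \Delta(T)+1$ for a tree, realized by such a port labeling).

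Next I would combine these. Color each $T_1$-edge $(a:h, b:h)$ (with $b$ the child of $a$ in $T_1$) by the pair $\big(\pi_1(b),\ \ell_2(h) \bmod 3\big)$ — no, more carefully: the idea is to use the level of the \emph{other} coordinate, reduced modulo a small number, as a second component to break the remaining conflicts that come from edges in parallel fibers. Specifically, two $T_1$-edges in fibers over $h$ and $h'$ can conflict only if $h=h'$ or $h\sim h'$ in $T_2$; distinguishing $\ell_2(h)\bmod 3$ handles the case $h\sim h'$, and the port labels $\pi_1$ handle the within-fiber and sibling conflicts. Symmetrically for $T_2$-edges. To make the total palette size $2\Delta$ rather than larger, I would let the $T_1$-edges use colors of the form $(\text{port in }T_1,\ *)$ drawn from a set of size $\Delta(T_1)+\Delta(T_2)$ and the $T_2$-edges use a disjoint set of the same size; the two classes of edges never conflict with each other unless they share a vertex, and a shared-vertex $T_1$-edge and $T_2$-edge can always be kept apart because they live in different color blocks only if needed — actually one must check that a $T_1$-edge and a $T_2$-edge incident to a common vertex, or at distance two, also get distinct colors, and here using two disjoint blocks of size $\Delta$ each is exactly what buys us room.

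Let me state the intended bookkeeping cleanly: partition a palette of $2\Delta$ colors into blocks $R$ and $B$ with $|R|=|B|=\Delta=\Delta(T_1)+\Delta(T_2)$, further index $R$ by pairs $(s,t)$ with $s\in\{1,\dots,\Delta(T_1)\}$ playing the role of a $T_1$-port and $t$ ranging over a set of size roughly $\Delta(T_2)/\Delta(T_1)+O(1)$ of ``level residues'', and symmetrically for $B$; color $T_1$-edges from $R$ using the $T_1$-port of the child endpoint together with the level of the fiber (its $T_2$-coordinate) reduced modulo the size of the $t$-range, and color $T_2$-edges from $B$ symmetrically. The verification then splits into a finite number of adjacency cases (two $T_1$-edges sharing a vertex; two $T_1$-edges at distance two in the same fiber; two $T_1$-edges in adjacent fibers; the three analogous $T_2$ cases; and mixed $T_1$/$T_2$ pairs at distance $\le 1$), and in each case one checks that at least one coordinate of the assigned color differs. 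The main obstacle I anticipate is the mixed case and the adjacent-fiber case simultaneously fitting inside only $2\Delta$ colors: one has to verify that the modular reduction of levels is coarse enough to keep the palette small yet fine enough that ``adjacent fibers have different residues,'' which forces the residue count to be at least $2$ and interacts delicately with the requirement $|R|=\Delta$. If a direct product palette is too tight, the fallback is to color $T_1$-edges with all $\Delta(T_1)+\Delta(T_2)$ colors via (port of child) $\oplus$ (a proper $2$-coloring of the ``fiber adjacency'' structure, which is itself a tree hence bipartite, contributing just one extra bit) and argue that one extra bit times $\Delta(T_1)$ still stays within $\Delta$ because $\Delta(T_2)\ge \Delta(T_1)$ may be assumed by symmetry — this is the step I would expect to need the most care to make airtight.
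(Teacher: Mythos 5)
Your plan rests on the assertion that a tree admits a strong edge coloring with $\Delta(T)+1$ colors realized by ``port labels'' in which siblings and parent--child pairs get distinct labels. That assertion is false: for trees one has $\sci(T)=\max_{(u,v)\in E}\bigl(d(u)+d(v)-1\bigr)$, which can be as large as $2\Delta(T)-1$. Concretely, in the double star (two adjacent vertices each carrying $\Delta-1$ leaves) all $2\Delta-1$ edges are pairwise in conflict, so no labeling with $\Delta+1$ values can work, and the sibling/parent constraints you impose are not the full set of constraints (e.g.\ a pendant edge at $u$ and a pendant edge at $v$ conflict although their child endpoints are neither siblings nor in a parent--child relation). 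This error is fatal to the whole scheme, because the restriction of any strong edge coloring of $T_1\Box T_2$ to a single $T_1$-fiber must itself be a strong edge coloring of $T_1$ (two edges of the fiber $T_1\times\{h\}$ are within distance one in the product if and only if they are within distance one in $T_1$), while in your product palette the second coordinate --- the level residue of the fiber's $T_2$-coordinate --- is constant on a fiber. So within each fiber your color is determined by the port alone, drawn from only $\Delta(T_1)$ (or $\Delta(T_1)+1$) values, which is too few. Repairing this by enlarging the port set to $2\Delta(T_1)-1$ and still keeping at least two residues to separate adjacent fibers forces roughly $4\Delta(T_1)$ colors on the $T_1$-side alone, and the symmetric count on the $T_2$-side pushes the total well past $2\Delta$; neither your primary palette nor the ``one extra parity bit'' fallback survives this accounting.

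The paper's proof avoids a product-form palette entirely. It reserves the colors $\{1,\dots,2\Delta_1\}$ for all $T_1$-fiber edges and $\{2\Delta_1+1,\dots,2\Delta_1+2\Delta_2\}$ for all $T_2$-fiber edges, roots both trees at \emph{leaves}, and builds the coloring on an increasing tower of induced subgraphs $G_1\subseteq G_2\subseteq\cdots$ determined by distance from the roots. The crucial invariants are that parallel copies of a fixed $T_1$-edge receive the \emph{same} color in fibers over vertices of $T_2$ at even distance (and different colors at odd distance), and symmetrically; at each extension step the $\Delta_1-1$ new edges at a vertex conflict with so few previously colored edges that fresh colors can always be found inside the block of size $2\Delta_1$. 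If you want a correct proof, you need some mechanism of this kind that lets fibers at even distance genuinely share colors, rather than a coordinate-wise palette whose within-fiber component is too small.
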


\begin{proof}
Let $x$ and $y$ be leaves of $T_1$ and $T_2$ respectively. We consider $T_1$ and $T_2$ as rooted trees with roots $x$ and $y$ respectively. Each of $x$ and $y$ has unique neighbors in their respective trees. Let $a$ be the only neighbor of $x$ in $T_1$ and $c$ be that of $y$ in $T_2$. 

We know that $2\Delta(T_1 \Box T_2) = 2\Delta(T_1) + 2\Delta(T_2)$. Now onwards we write $\Delta_1$ for $\Delta(T_1)$ and $\Delta_2$ for $\Delta(T_2)$. We give a coloring to the edges of $T_1 \Box T_2$.
The Cartesian product of the subgraph $S_1$, induced by $ \{x,a\}$ in $T_1$ and the subgraph  $S_2$, induced by $\{y,c\}$ in $T_2$ is just a $4$-cycle. Now give colors $2\Delta_1-1, 2\Delta_1, 2\Delta_1+2\Delta_2-1, 2\Delta_1+2\Delta_2$ to $(x:y,a:y), (a:c,x:c), (x:c,x:y), (a:y,a:c)$ respectively.

The idea is to construct a tower of induced subgraphs 
$G_1= S_1 \Box S_2 \subseteq G_2 \subseteq G_3 \subseteq ... \subseteq G_n = T_1 \Box T_2$ of $T_1 \Box T_2$ and in each step we extend the  coloring  of  $G_{i-1}$ to a strong edge coloring of $G_{i}$, so that the coloring satisfies the following conditions:
\begin{enumerate}[label=(\roman*)]
\item\label{ac} All the edges of $T_1 \times V(T_2)$ (recall it is collection of all the edges of the $T_1$-fibers) use colors from $\{1,2,3,...,2\Delta_1\}$.
\item\label{bd} All the edges of $V(T_1) \times T_2$ use colors from $\{2\Delta_1+1,2\Delta_1+2,...,2\Delta_1+
2\Delta_2\}$.
\item\label{b} The distance from $v$ to $v'$ is even in $T_2$ if and only if for every $(u,u')\in E(T_1)$, the color of the edge $(u:v,u':v)$ is same as the color of $(u:v',u':v')$.
\item\label{a} The distance from $u$ to $u'$ is even in $T_1$ if and only if for every $(v,v') \in E(T_2)$, the color of the edge $(u:v,u:v')$ is same as the color of $(u':v,u':v')$.
\end{enumerate} 

Loosely speaking this will be carried out by visiting carefully selected
 representative vertices in a sequence where the incident uncolored edges are
  in an interaction with relatively few previously colored edges; and then
  copy the colors to further subsets of edges in the current subgraph.
 The procedure will be performed on $T_1$ and $T_2$ separately.
This approach can be formalised in a precise way as described below.

We define $G_2$ to be the subgraph induced by $\{u:v\mid  d_{T_1}(x,u) \leq 2, \ d_{T_2}(y,v) \leq 2\}$ of $T_1 \Box T_2$. 
Now $|\{(a:y,a':y),(a:c,a':c) \mid  a'$ is adjacent to $a$ and $a' \neq x\}| \leq 2\Delta_1-2$. So we can assign colors $1,2,...,2\Delta_1-2$ to these edges such that no two edges get the same color. Similarly we can distribute the colors $2\Delta_1+1, 2\Delta_1+2,...,2\Delta_1+2\Delta_2-2$ to $\{(x:c,x:c'),(a:c,a:c')\mid  c'$ is adjacent to $c$ and $c' \neq y\}$, so that no two edges get the same color.

\begin{figure}[h]
\begin{center}
	\definecolor{qqqqff}{rgb}{0,0,1}
	\begin{center}
		\begin{tikzpicture}[thick, scale=0.83][line cap=round,line join=round,>=triangle 45,x=1.0cm,y=1.0cm]
			\clip(3,-12.88) rectangle (24.59,0.39);
			\draw (5.52,-6.09)-- (5.52,-4.09)
			node[sloped,pos=0.5,above]{\begin{tiny}
					$2\Delta_1-1$
			\end{tiny}};
			\draw (5.52,-6.09)-- (6.48,-7.43)
			node[sloped,pos=0.5,above]{\begin{tiny}
					$1$
			\end{tiny}};
			\draw (6.48,-7.43)-- (6.52,-9.09);
			\draw (5.52,-10.09)-- (4.52,-8.09);
			\draw (4.52,-8.09)-- (3.52,-10.09);
			\draw (4.52,-8.09)-- (5.52,-6.09)
			node[sloped,pos=0.5,above]{\begin{tiny}
					$2$
			\end{tiny}};
			\draw (10.72,-3.5)-- (10.72,-1.5)
			node[sloped,pos=0.5,above]{\begin{tiny}
					$2\Delta_1$
			\end{tiny}};
			\draw (10.72,-3.5)-- (11.68,-4.84)
			node[sloped,pos=0.5,below]{\begin{tiny}
					$\Delta_1+1$
			\end{tiny}};
			\draw (11.68,-4.84)-- (11.72,-6.5);
			\draw (10.7,-7.7)-- (9.72,-5.5);
			\draw (9.72,-5.5)-- (8.72,-7.5);
			\draw (9.72,-5.5)-- (10.72,-3.5)
			node[sloped,pos=0.5,above]{\begin{tiny}
					$\Delta_1+2$
			\end{tiny}};
			\draw (16.61,-7.51)-- (16.61,-5.51);
			\draw (16.61,-7.51)-- (17.57,-8.85);
			\draw (17.57,-8.85)-- (17.61,-10.51);
			\draw (16.61,-11.51)-- (15.61,-9.51);
			\draw (15.61,-9.51)-- (14.61,-11.51);
			\draw (15.61,-9.51)-- (16.61,-7.51);
			\draw (10.72,-1.5)-- (5.52,-4.09)
			node[sloped,pos=0.5,above]{\begin{tiny}
					$2\Delta_1+2\Delta_2-1$
			\end{tiny}};
			\draw (19.92,-2.64)-- (19.92,-0.64);
			\draw (19.92,-2.64)-- (20.88,-3.98);
			\draw (20.88,-3.98)-- (20.92,-5.64);
			\draw (20.02,-6.84)-- (18.92,-4.64);
			\draw (18.92,-4.64)-- (17.92,-6.64);
			\draw (18.92,-4.64)-- (19.92,-2.64);
			\draw (10.72,-1.5)-- (19.92,-0.64)
			node[sloped,pos=0.5,above]{\begin{tiny}
					$2\Delta_1+11$
			\end{tiny}};
			\draw (10.72,-1.5)-- (16.61,-5.51)
			node[sloped,pos=0.6,above]{\begin{tiny}
					$2\Delta_1+2$
			\end{tiny}};
			\draw (10.72,-3.5)-- (5.52,-6.09)
			node[sloped,pos=0.5,above]{\begin{tiny}
					$2\Delta_1+2\Delta_2$
			\end{tiny}};
			\draw (10.72,-3.5)-- (19.92,-2.64)
			node[sloped,pos=0.5,above]{\begin{tiny}
					$2\Delta_1++\Delta_2+1$
			\end{tiny}};
			\draw (10.72,-3.5)-- (16.61,-7.51)
			node[sloped,pos=0.5,above]{\begin{tiny}
					$2\Delta_1+\Delta_2+2$
			\end{tiny}};
			\draw (11.68,-4.84)-- (6.48,-7.43);
			\draw (11.68,-4.84)-- (17.57,-8.85);
			\draw (11.68,-4.84)-- (20.88,-3.98);
			\draw (6.52,-9.09)-- (11.72,-6.5);
			\draw (11.72,-6.5)-- (17.61,-10.51);
			\draw (11.72,-6.5)-- (20.92,-5.64);
			\draw (10.7,-7.7)-- (16.61,-11.51);
			\draw (10.7,-7.7)-- (20.02,-6.84);
			\draw (10.7,-7.7)-- (5.52,-10.09);
			\draw (8.72,-7.5)-- (3.52,-10.09);
			\draw (8.72,-7.5)-- (14.61,-11.51);
			\draw (8.72,-7.5)-- (17.92,-6.64);
			\begin{scriptsize}
				\fill [color=qqqqff] (5.52,-4.09) circle (1.5pt);
				\draw[color=qqqqff] (5.5,-3.86) node {$x:y$};
				\fill [color=qqqqff] (5.52,-6.09) circle (1.5pt);
				\draw[color=qqqqff] (5,-6) node {$x:c$};
				\fill [color=qqqqff] (6.48,-7.43) circle (1.5pt);
				\draw[color=qqqqff] (6,-7.5) node {$x:c_1$};
				\fill [color=qqqqff] (4.52,-8.09) circle (1.5pt);
				\draw[color=qqqqff] (4,-7.86) node {$x:c_2$};
				\fill [color=qqqqff] (6.52,-9.09) circle (1.5pt);
				\draw[color=qqqqff] (6.66,-8.86) node {};
				\fill [color=qqqqff] (5.52,-10.09) circle (1.5pt);
				\draw[color=qqqqff] (5.64,-9.86) node {};
				\fill [color=qqqqff] (3.52,-10.09) circle (1.5pt);
				\draw[color=qqqqff] (3.66,-9.86) node {};
				\fill [color=qqqqff] (10.72,-1.5) circle (1.5pt);
				\draw[color=qqqqff] (10.99,-1.26) node {$a:y$};
				\fill [color=qqqqff] (10.72,-3.5) circle (1.5pt);
				\draw[color=qqqqff] (10,-3.4) node {$a:c$};
				\fill [color=qqqqff] (11.68,-4.84) circle (1.5pt);
				\draw[color=qqqqff] (11,-5.5) node {$a:c_1$};
				\fill [color=qqqqff] (9.72,-5.5) circle (1.5pt);
				\draw[color=qqqqff] (9,-5.3) node {$a:c_2$};
				\fill [color=qqqqff] (11.72,-6.5) circle (1.5pt);
				\draw[color=qqqqff] (11.99,-6.26) node {};
				\fill [color=qqqqff] (10.7,-7.7) circle (1.5pt);
				\draw[color=qqqqff] (10.95,-7.46) node {};
				\fill [color=qqqqff] (8.72,-7.5) circle (1.5pt);
				\draw[color=qqqqff] (8.99,-7.26) node {};
				\fill [color=qqqqff] (16.61,-5.51) circle (1.5pt);
				\draw[color=qqqqff] (16.89,-5.26) node {$a_1:y$};
				\fill [color=qqqqff] (16.61,-7.51) circle (1.5pt);
				\draw[color=qqqqff] (17.3,-7.4) node {$a_1:c$};
				\fill [color=qqqqff] (17.57,-8.85) circle (1.5pt);
				\draw[color=qqqqff] (18,-8.61) node {$a_1:c_1$};
				\fill [color=qqqqff] (15.61,-9.51) circle (1.5pt);
				\draw[color=qqqqff] (15,-9.28) node {$a_1:c_2$};
				\fill [color=qqqqff] (17.61,-10.51) circle (1.5pt);
				\draw[color=qqqqff] (17.89,-10.28) node {};
				\fill [color=qqqqff] (16.61,-11.51) circle (1.5pt);
				\draw[color=qqqqff] (16.88,-11.28) node {};
				\fill [color=qqqqff] (14.61,-11.51) circle (1.5pt);
				\draw[color=qqqqff] (14.89,-11.28) node {};
				\fill [color=qqqqff] (19.92,-0.64) circle (1.5pt);
				\draw[color=qqqqff] (20.2,-0.41) node {$a_2:y$};
				\fill [color=qqqqff] (19.92,-2.64) circle (1.5pt);
				\draw[color=qqqqff] (20.5,-2.41) node {$a_2:c$};
				\fill [color=qqqqff] (20.88,-3.98) circle (1.5pt);
				\draw[color=qqqqff] (21.5,-3.73) node {$a_2:c_1$};
				\fill [color=qqqqff] (18.92,-4.64) circle (1.5pt);
				\draw[color=qqqqff] (20,-4.41) node {$a_2:c_2$};
				\fill [color=qqqqff] (20.92,-5.64) circle (1.5pt);
				\draw[color=qqqqff] (21.2,-5.41) node {};
				\fill [color=qqqqff] (20.02,-6.84) circle (1.5pt);
				\draw[color=qqqqff] (20.28,-6.61) node {};
				\fill [color=qqqqff] (17.92,-6.64) circle (1.5pt);
				\draw[color=qqqqff] (18.2,-6.41) node {};
			\end{scriptsize}
		\end{tikzpicture}
	\end{center}
\end{center}
\end{figure}

Since $T_1$ is a tree, if $a'$ and $a''$ are adjacent to $a$, then they are not adjacent to each other,
 therefore $\{ (a':y,a':c)\mid  a' \sim a\}$ forms an induced matching; and so does
  $\{ (a':c,a':c')\mid  a' \sim a\}$ for any fixed neighbor $c'$ of $c$ in $T_2$.
Hence we can assign the color of $(x:y,x:c)$  to $\{ (a':y,a':c)\mid  a'\neq x, \ a' \sim a\}$ and the color of  $(x:c,x:c')$ to $\{ (a':c,a':c')\mid  a' \neq x, \ a' \sim a\}$. 
Similarly, for every fixed $a' \sim a$, we can give the color of $(a':y,a:y)$ to $\{ (a:c',a':c')\mid c'\neq y, \ c' \sim c\}$. Clearly this coloring satisfies \ref{ac}, \ref{bd}, \ref{b}, \ref{a}. 

Now for every $\ell$, define $G_{\ell}$ to be the subgraph induced by $\{u:v\mid  d_{T_1}(x,u) \leq \ell, \ d_{T_2}(y,v) \leq \ell\}$ (note that if $m = \max\{ \mathrm{diam}(T_1), \mathrm{diam}(T_2)\}$, then $G_m = T_1 \Box T_2$).
Suppose we have a coloring of $G_{\ell-1}$ that satisfies \ref{ac}, \ref{bd}, \ref{b}, \ref{a}. Now we give coloring to the rest of the edges of $G_{\ell}$. First we assign the colors to $\{ (u:v,u':v)\mid  (u,u') \in E(T_1),v \in V(T_2)\}$.

 Let $u$ be a vertex of $T_1$ of distance $\ell-1$ from $x$ and $s$ be its parent. 
The colored edges of distance two from $\{ (u:y,u':y)\mid  u'\neq s, \ u' \sim u\}$ are the edges incident to $s:y$ or $u:c$. Since $G_{\ell-1}$ satisfies \ref{ac} and \ref{bd}, so $(s:c,u:c)$ is the only edge adjacent to $u:c$ that uses a color among $1,2,3,...,2\Delta_1$.
Again, coloring of $G_{i-1}$ satisfies \ref{b}, there are at least $\Delta_1-1$ colors among $\{1,2,3,...,2\Delta_1\}$, not being used by any of the edges incident to $s:y$ or by the edge $(s:c,u:c)$. We can assign those colors to $\{ (u:y,u':y)\mid  u' \neq s, \ u' \sim u\}$, so that no two edges get the same color.
Then for any fixed $u'\neq s$ where $u' \sim u$, the only colored edges of distance two from $(u:c,u':c)$ are the edges incident to $s:c$ or incident to one of the vertices of the set $\{u:c'\mid c' \sim c\}$. 
Now for any $c'\neq y$ adjacent to $c$, $(s:c',u:c')$ is the only edge incident to  $u:c'$ that uses colors among $1,2,3,...,2\Delta_1$ (by \ref{ac}, \ref{bd}). Again \ref{b} says that the color of  $(s:c',u:c')$ is the color of $(s:y,u:y)$.
Therefore, there are $\Delta_1-1$ suitable colors among $1,2,3,...,2\Delta_1$ for $\{ (u:c,u':c)\mid  u'\neq s, \ u' \sim u\}$. Distribute those colors to $\{ (u:c,u':c)\mid  u'\neq s, \ u' \sim u\}$.

Now we claim the following for a fixed $u' \neq s$, where $u'$ is adjacent to $u$.

\medskip

\textit{Claim:} We can assign the color of $(u:v,u':v)$ to $\{ (u:t,u':t)\mid  d_{T_2}(v,t) = 2\}$.

\textit{Proof of the Claim:} We apply induction on distance from $u$. If $(u:t,u':t)$ is already colored, then the induction hypothesis assures the claim. Now we may assume that $(u:t,u':t)$ did not get a color till now and $v'$ is adjacent to both $v$ and $t$. The only colored edges of distance two from $(u:t,u':t)$ are $\{ (s:t',u:t')\mid  t'$ is a child of $t\}$ and the edges incident to $u:v'$ or $s:t$. Since the coloring of $G_{\ell-1}$ satisfies \ref{b}, the only colors which may create problems are the colors of the edges incident to $u:v'$ or $s:t$.

Since $v'$ is adjacent to $t$, the color of $(u:v,u':v)$ is not used by any of the edges incident to $u:v'$. Also \ref{b} ensures that it is not used by the edges incident to $s:t$. Again $\{ (u:t,u':t)\mid  d_{T_2}(v,t) = 2\}$ is an induced matching. Hence we can assign the color of $(u:v,u':v)$ to $\{ (u:t,u':t)\mid  d_{T_2}(v,t) = 2\}$.
The claim is proved. By coloring accordingly we get a partial strong edge coloring that satisfies \ref{ac} and \ref{b}.

\medskip

By symmetry we can color $V(T_1) \times T_2$ by using the colors $2\Delta_1+1, 2\Delta_1+2,...,2\Delta_1+2\Delta_2$ such that it satisfies \ref{bd} and \ref{a}.
That says the strong chromatic index of $G_n = T_1 \Box T_2$ is at most $2\Delta_1 + 2\Delta_2 = 2\Delta(T_1 \Box T_2)$.
\end{proof}

In the above theorem we observed an upper bound for the Cartesian product of two trees. This bound is tight. That is there exist trees $T_1$ and $T_2$ such that $\sci(T_1 \Box T_2) = 2\Delta(T_1 \Box T_2)$. Before showing that, we find a lower bound for the same, in the next theorem.

\begin{theorem}\label{lowerBound}
Let $T_1$ and $T_2$ be two trees. Then
 $$\sci(T_1 \Box T_2) \geq \max \{2\Delta(T_1)+\Delta'(T_2), 2\Delta(T_2)+\Delta'(T_1)\}+1 = \Delta'(T_1 \Box T_2)+1 .$$
\end{theorem}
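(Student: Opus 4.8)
The plan is to verify the displayed equality first and then to establish the inequality by exhibiting a large clique in the square of the line graph. For the equality, I would use the fact that every edge of $T_1\Box T_2$ lies in a fiber. An edge $(a:v,b:v)$ of a $T_1$-fiber, with $(a,b)\in E(T_1)$, has edge degree $d_{T_1}(a)+d_{T_1}(b)-1+2\,d_{T_2}(v)$, which is at most $\Delta'(T_1)+2\Delta(T_2)$, with equality when $(a,b)$ realises $\Delta'(T_1)$ and $d_{T_2}(v)=\Delta(T_2)$; symmetrically for the $T_2$-fibers. Hence $\Delta'(T_1\Box T_2)=\max\{\Delta'(T_1)+2\Delta(T_2),\,\Delta'(T_2)+2\Delta(T_1)\}$, which is the right-hand side of the statement, so it remains to show $\sci(T_1\Box T_2)\ge\Delta'(T_1\Box T_2)+1$.

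Since $\sci(G)$ equals the chromatic number of $(L(G))^2$, it suffices to find a set $F$ of $\Delta'(T_1\Box T_2)+1$ edges that are pairwise at distance at most one in $T_1\Box T_2$, meaning that any two of them share a vertex or are joined by an edge. By symmetry assume $\Delta'(T_1\Box T_2)=\Delta'(T_1)+2\Delta(T_2)=:D$. I would fix an edge $(a,b)\in E(T_1)$ with $d_{T_1}(a)+d_{T_1}(b)-1=\Delta'(T_1)$ and a vertex $u\in V(T_2)$ with $d_{T_2}(u)=\Delta(T_2)$; as $T_2$ contains an edge, $\Delta(T_2)\ge1$, so $u$ has a neighbour $u'$ in $T_2$. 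Then take $F$ to be all edges incident to $a:u$, together with all edges incident to $b:u$, together with the single extra edge $f=(a:u',b:u')$. Because $a:u\sim b:u$, the first two families meet only in the edge $(a:u,b:u)$, so they contribute $d(a:u)+d(b:u)-1=(d_{T_1}(a)+\Delta(T_2))+(d_{T_1}(b)+\Delta(T_2))-1=D$ edges; and $f$ is incident to neither $a:u$ nor $b:u$ since $u'\ne u$, so $|F|=D+1$.

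It then remains to check that $F$ is a clique in $(L(G))^2$. Two edges both incident to $a:u$, or both to $b:u$, share that vertex; an edge incident to $a:u$ and an edge incident to $b:u$ are joined by the edge $(a:u,b:u)$; and, finally, $a:u'\sim a:u$ shows that $f$ is joined to every edge incident to $a:u$, while $b:u'\sim b:u$ shows that $f$ is joined to every edge incident to $b:u$. Hence the $D+1$ edges of $F$ must receive pairwise distinct colors, and $\sci(T_1\Box T_2)\ge D+1=\Delta'(T_1\Box T_2)+1$.

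The only genuinely nontrivial point is the choice of the extra edge $f$. Picking it as the ``parallel copy'' of $(a:u,b:u)$ lying one step away along a $T_2$-fiber is exactly what lets $f$ be simultaneously close, in $T_1\Box T_2$, to all $D$ edges incident to $a:u$ or to $b:u$; once $f$ is chosen the rest is routine degree bookkeeping. I expect no real obstacle beyond this, and it is reassuring that no such parallel edge is available inside a single tree, which is consistent with the known fact that $\sci(T)=\Delta'(T)$ for trees.
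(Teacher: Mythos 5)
Your proposal is correct and is essentially the paper's own argument: the paper likewise fixes an edge $(a,b)$ of $T_1$ realising $\Delta'(T_1)$, a maximum-degree vertex $c$ of $T_2$ with neighbour $d$, and observes that the edge $(a:d,b:d)$ together with all edges incident to $a:c$ or $b:c$ must receive pairwise distinct colors. Your write-up merely spells out the degree bookkeeping and the clique verification that the paper leaves implicit.
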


\begin{proof}
Let $(a,b)$ be an edge with maximum edge degree in $T_1$. Also let $c$ be a vertex of maximum degree in $T_2$ and $d$ be a neighbor of it. 
Now the edge $(a:d,b:d)$ and all the edges incident to $a:c$ or $b:c$ should get different colors. So $\sci(T_1 \Box T_2) \geq \Delta'(T_1) + 2\Delta(T_2)+1$. Similarly, $\sci(T_1 \Box T_2) \geq \Delta'(T_2) + 2\Delta(T_1)+1$. Therefore, 
$\sci(T_1 \Box T_2) \geq \max \{ \Delta'(T_1) + 2\Delta(T_2), \Delta'(T_2) + 2\Delta(T_1)\}+1 = \Delta'(T_1 \Box T_2) +1 $.
\end{proof}

Let $T_1$ and $T_2$ be two trees and suppose that one of them has two maximum-degree vertices adjacent to each other. Then the upper and lower bound of $\sci(T_1 \Box T_2)$ are the same. So the following corollary is immediate.

\begin{corollary}   \label{cor:path}
Let $T$ be a tree. Then $\chi _s'(T \Box P_n) = 2(\Delta (T)+\Delta (P_n)) = 2\Delta (T \Box P_n)$ for every $n>3$, or $n=2$.
\end{corollary}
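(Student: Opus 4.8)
The plan is to obtain the corollary by sandwiching $\sci(T\Box P_n)$ between the upper bound of Theorem~\ref{ub} and the lower bound of Theorem~\ref{lowerBound}, and then to check that these two bounds coincide precisely because the path $P_n$ — one of the two factor trees — carries an edge whose \emph{both} endpoints attain $\Delta(P_n)$ in the prescribed range of $n$. Since a path is a tree, Theorem~\ref{ub} immediately yields $\sci(T\Box P_n)\le 2\Delta(T\Box P_n)=2\big(\Delta(T)+\Delta(P_n)\big)$, using the standard identity $\Delta(G\Box H)=\Delta(G)+\Delta(H)$ (already exploited in the proof of Theorem~\ref{ub}). This is the easy half and needs no case analysis.

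For the matching lower bound I would invoke Theorem~\ref{lowerBound}, which gives $\sci(T\Box P_n)\ge \Delta'(T\Box P_n)+1=\max\{2\Delta(T)+\Delta'(P_n),\,2\Delta(P_n)+\Delta'(T)\}+1$. The key elementary observation is that for $n=2$ and for every $n\ge 4$ the path $P_n$ contains an edge $(v,v')$ with $d_{P_n}(v)=d_{P_n}(v')=\Delta(P_n)$: when $n\ge 4$ take any edge joining two internal vertices (which exists, since $P_n$ then has at least two consecutive degree-$2$ vertices), and when $n=2$ take the unique edge of $P_2$, whose two endpoints both have degree $1=\Delta(P_2)$. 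Hence $\Delta'(P_n)=2\Delta(P_n)-1$ in these cases, so that $2\Delta(T)+\Delta'(P_n)=2\Delta(T)+2\Delta(P_n)-1=2\Delta(T\Box P_n)-1$. Substituting this into Theorem~\ref{lowerBound} gives $\sci(T\Box P_n)\ge \big(2\Delta(T\Box P_n)-1\big)+1=2\Delta(T\Box P_n)$, which combined with the upper bound delivers the claimed equality.

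I would also add a sentence explaining why $n=3$ is excluded: in $P_3$ only the central vertex attains the maximum degree, so $\Delta'(P_3)=\Delta(P_3)=2$ and the lower-bound argument then falls one short; moreover $T$ itself (for instance a star) need not possess two adjacent maximum-degree vertices, so the identity cannot be recovered through the other factor either. There is essentially no serious obstacle: the corollary is a direct consequence of Theorems~\ref{ub} and~\ref{lowerBound} together with the structural remark stated immediately before it. The only items demanding care are the small bookkeeping points — confirming $\Delta(T\Box P_n)=\Delta(T)+\Delta(P_n)$, locating two adjacent maximum-degree vertices of $P_n$ exactly for $n=2$ and $n\ge 4$, and the one-line arithmetic showing $\Delta'(T\Box P_n)+1=2\Delta(T\Box P_n)$ in those cases.
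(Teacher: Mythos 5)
Your proposal is correct and is essentially identical to the paper's argument: the authors also obtain the result by combining the upper bound of Theorem~\ref{ub} with the lower bound of Theorem~\ref{lowerBound}, observing that for $n=2$ or $n>3$ the path $P_n$ has two adjacent maximum-degree vertices, so $\Delta'(T\Box P_n)+1=2\Delta(T\Box P_n)$. Your added explanation of why $n=3$ must be excluded is a nice touch that the paper leaves implicit.
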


In general the upper bound does not hold with equality. An example is the Cartesian product of two stars. Its strong chromatic index is determined in the following theorem.

\begin{theorem}
The strong chromatic index of $K_{1,n} \Box K_{1,m}$ is  $2n+m+2$, where $n\geq m$.
\end{theorem}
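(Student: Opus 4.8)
Write $K_{1,n}$ with centre $x$ and leaves $x_1,\dots,x_n$, and $K_{1,m}$ with centre $y$ and leaves $y_1,\dots,y_m$; throughout I assume $n\ge m\ge 2$ (for $m=1$ the product is $K_{1,n}\Box P_2$ and Corollary~\ref{cor:path} already gives $\sci=2n+2$, so some such restriction is needed). The edges of $K_{1,n}\Box K_{1,m}$ fall into the $n+m$ \emph{spokes} at $x{:}y$, namely $(x{:}y,x_i{:}y)$ and $(x{:}y,x{:}y_j)$, and the remaining edges, which form two families $L_{ij}=(x{:}y_j,x_i{:}y_j)$ and $R_{ij}=(x_i{:}y,x_i{:}y_j)$, $i\in[n]$, $j\in[m]$. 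The first thing I would record is that \emph{every spoke lies within distance $1$ of every other edge}: each non-spoke edge has an endpoint ($x{:}y_j$ or $x_i{:}y$) that is a neighbour of $x{:}y$, and the spokes are pairwise incident at $x{:}y$. Hence in any strong edge colouring the $n+m$ spokes use $n+m$ colours that appear on no other edge, so $\sci(K_{1,n}\Box K_{1,m})=(n+m)+\chi(H)$, where $H$ is the conflict graph on the $L_{ij},R_{ij}$ (adjacency $=$ being at distance $\le 1$ in the graph). A direct check gives the adjacency rules of $H$: $L_{ij}\sim L_{i'j}$ for $i\ne i'$; $R_{ij}\sim R_{ij'}$ for $j\ne j'$; and $L_{ij}\sim R_{i'j'}$ iff $i=i'$ or $j=j'$. (Equivalently, $H$ is precisely the strong-colouring conflict graph of the $1$-subdivision $S(K_{n,m})$, with $L_{ij},R_{ij}$ the two edges replacing $u_iw_j$.) So the theorem is the statement $\chi(H)=n+2$.

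For the lower bound I would first note that $\{L_{11},\dots,L_{n1},R_{11}\}$ is a clique of $H$ of size $n+1$, so $\chi(H)\ge n+1$. Suppose, for contradiction, that $H$ is properly coloured with a set $T$ of exactly $n+1$ colours. For each column $j$ the set $\{L_{1j},\dots,L_{nj}\}\cup\{R_{i_0j}\}$ is a clique of size $n+1=|T|$, hence uses every colour of $T$; since the $n$ colours of $\{L_{ij}\}_i$ are distinct, this \emph{forces} $c(R_{i_0j})$ to equal the unique colour $\mu_j\in T$ missing from $\{c(L_{ij})\}_i$, and $\mu_j$ does not depend on $i_0$. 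Thus $c(R_{ij})=\mu_j$ for all $i$, $\{c(L_{ij})\}_i=T\setminus\{\mu_j\}$, and $\mu_1,\dots,\mu_m$ are pairwise distinct because $R_{1,\cdot}$ is a clique. Now pick $j\ne j'$ in $[m]$ (this is where $m\ge 2$ enters): $\mu_{j'}\in T\setminus\{\mu_j\}=\{c(L_{ij})\}_i$, so $c(L_{i^\ast j})=\mu_{j'}$ for some $i^\ast$; but $L_{i^\ast j}\sim R_{i^\ast j'}$ and $c(R_{i^\ast j'})=\mu_{j'}$, a contradiction. Therefore $\chi(H)\ge n+2$ and $\sci(K_{1,n}\Box K_{1,m})\ge 2n+m+2$.

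For the upper bound I would exhibit a proper colouring of $H$ with $n+2$ colours $\{s_0,h,g_1,\dots,g_n\}$ by
\[
 c(L_{ij})=\begin{cases}s_0,& i=j\\ g_i,& i\ne j\end{cases}\qquad\qquad
 c(R_{ij})=\begin{cases}h,& i=j\\ g_j,& i\ne j\end{cases}
\]
(the case $i=j$ only occurs when $j\le m\le n$). Then each $L$-column uses $\{s_0\}\cup\{g_i:i\ne j\}$, which are $n$ distinct colours, while the $R$-colours in column $j$ form $\{h,g_j\}$, disjoint from those; each $R$-row uses $m$ distinct colours and is disjoint from the corresponding $L$-row, which lies in $\{s_0,g_i\}$; and $c(L_{ij})\ne c(R_{ij})$ cell by cell. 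Checking these facts against the three adjacency rules of $H$ shows the colouring is proper, so $\chi(H)\le n+2$. Combining it with $n+m$ fresh colours for the spokes gives a strong edge colouring of $K_{1,n}\Box K_{1,m}$ with exactly $2n+m+2$ colours, and with the lower bound this proves the theorem.

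The step I expect to require the most care is the lower bound: the ``collapse'' observation that $n+1$ colours force \emph{all} $R$-edges of a fixed column to share the colour $\mu_j$ is the crux, and one must be careful that the column clique invoked has size exactly $|T|$ and that the final contradiction genuinely uses a second column $j'\ne j$ (hence $m\ge 2$). The verification that the explicit formula for $c$ meets every distance-$\le 1$ constraint of $H$ is routine but should be carried out for each of the (few) adjacency types listed above.
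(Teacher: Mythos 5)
Your proposal is correct, and it is worth separating the two halves. Your upper bound is essentially the paper's construction in different clothing: your colour class $g_i$ (for $i\le m$) is exactly the paper's matching $A_i$, your $g_i$ for $i>m$ are its ``long-edge'' classes, and $s_0,h$ are its two diagonal classes. (Incidentally, the paper's displayed formula for $A_i$ contains an index slip --- the edges $(x{:}y_i,x_j{:}y_i)$ all share the vertex $x{:}y_i$, so $A_i$ as literally written is not an induced matching; your version is the intended, corrected one.) Your lower bound, however, is a genuinely different argument. The paper classifies the colour classes on non-spoke edges into three types, bounds how many $K_{1,m}$-fiber edges and $K_{1,n}$-fiber edges each type can absorb, and derives $a+b+c>n+1$ from two counting inequalities. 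You instead run a rigidity argument on the conflict graph: the column cliques of size exactly $n+1$ force every $R$-edge of column $j$ to carry the single colour $\mu_j$ missed by that $L$-column, the row clique $R_{1,\cdot}$ forces the $\mu_j$ to be distinct, and a second column then produces an explicit conflict $L_{i^\ast j}\sim R_{i^\ast j'}$. Your route is more structural and pinpoints where a hypothetical $(n+1)$-colouring breaks, at the cost of needing the exact clique sizes; the paper's counting is more robust but less transparent. Finally, your observation that $m\ge 2$ is genuinely required is correct and is a point the paper misses: for $m=1$ one has $K_{1,n}\Box K_{1,1}=K_{1,n}\Box K_2$, whose strong chromatic index is $2n+2$ by Corollary~\ref{cor:path}, not the value $2n+3$ the theorem would predict, so the stated formula really does fail there and both lower-bound arguments must (and do) use a second column.
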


\begin{proof}
 Let $(\{x\},\{x_1,x_2,...,x_n\})$ and $(\{y\},\{y_1,...,y_m\})$ be the bipartition of  $K_{1,n}$ and $K_{1,m}$ respectively.
We first give a strong edge coloring on $K_{1,n} \Box K_{1,m}$ with $2n+m+2$ colors as follows.

\begin{itemize}
  \item $n+m$ colors on the edges incident with the vertex $x:y$, one dedicated
    color for each edge.
  \item $n-m$ colors on the `long' edges of the $K_{1,n}$-fibers, namely one
    color for each induced matching of type $\{(x:y_j,x_i:y_j)\mid 1\leq j\leq m\}$,
    in the range $m+1\leq i\leq n$.
  \item $m$ colors on induced matchings which take one edge from each of $n-1$
    $K_{1,m}$-fibers and one from each of $m-1$ $K_{1,n}$-fibers, defined as
    $$ A_i := \{ (x:y_i, x_j:y_i) \mid 1\leq j\leq n, \ j\neq i \}
     \cup \{ (x_i:y, x_i:y_j) \mid 1\leq j\leq m, \ j \neq i \} $$
    Hence the vertices of type $x_i:y_j$ in $A_i$ are located in the union of
     two fibers, namely in $(\{x_i\}\times K_{1,m})\cup (K_{1,n}\times \{y_i\})$,
     omitting their intersection vertex $x_i:y_i$.
  \item 2 colors for the edges ending in the diagonal vertices $x_i:y_i$, namely
    one color for $\{ (x_i:y,x_i:y_i) \mid 1\leq i\leq m \}$
     and one for $\{ (x:y_i,x_i:y_i) \mid 1\leq i\leq m \}$.
\end{itemize}
It is easily checked that each color class is an induced matching, and their
 union covers the entire edge set.




Next we show that fewer than $2n+m+2$ colors are not enough.
Note that each of the $n+m$ edges incident with the vertex $x:y$ is adjacent to every other edge.
This requires $n+m$ private colors for these edges. Hence it is sufficient to prove that the edges not incident with $x:y$ require more than $n+1$ further colors.
Since no color can occur more than once on the edges of any $K_{1,m}$-fiber centered at an $x_i:y$, neither on any $K_{1,n}$-fiber centered at an $x:y_i$, only the following three types of color classes can occur:
\begin{enumerate}[label=(\roman*)]
  \item Precisely $m$ edges $(x:y_i,x_{j_i}:y_i)$ for $i=1,2,\dots,m$, where $j_i$ is arbitrary ($1\le j_i\le n$);
  let the number of such color classes be denoted by $a$.
  \item At most $m-1$ edges incident with vertices of type $x:y_i$, and at most $n-1$ edges incident with some of the $x_i:y$\,;
     let the number of such color classes be denoted by $b$.
  \item Precisely $n$ edges $(x_i:y,x_i:y_{j_i})$ for $i=1,2,\dots,n$, where $j_i$ is arbitrary ($1\le j_i\le m$);
     let the number of such color classes be denoted by $c$.
\end{enumerate}
Hence the number of edges involved in such a color partition is
 distributed as shown in Table \ref{tab:star}.
(The row correspondng to $b$ gives an upper bound, the other lines are exact.)

\renewcommand{\arraystretch}{1.2}
  \begin{table}[htp]
	\begin{center}%
\begin{tabular}
[c]{|c||c|c|}\hline
\# of classes & $K_{1,m}$-edges & $K_{1,n}$-edges  \\ \hline\hline
$a$ & $m$ & 0 \\
$b$ & $m-1$ & $n-1$ \\ 
$c$ & 0 & $n$ \\ 
 \hline
\end{tabular}
 \caption{Number of classes, and largest possible number of edges per class.
   \label{tab:star}}
\end{center}
  \end{table}

The $n$ copies of $K_{1,m}$, as well as the $m$ copies of $K_{1,n}$,
 contain precisely $mn$ edges.
Therefore the following inequalities must hold:
 \begin{eqnarray}
   am + b(m-1) & \ge & mn \\
   b(n-1) + cn & \ge & mn
 \end{eqnarray}
Rearrangement yields
 \begin{equation}   \label{a+b}
   a+b \ge n + b/m \qquad \mbox{\rm and} \qquad c \ge m - b + b/n 
 \end{equation}
 Thus, 
  $$ a+b+c \ge n + m - b (1 - 1/m - 1/n). $$
This inequality implies the required lower bound $a+b+c>n+1$, unless
 $$ b \left( 1 - \frac{1}{m} - \frac{1}{n} \right) \ge m-1. $$
In this case, however, we have
 $$ b \ge \frac{m-1}{1-\frac{1}{m}-\frac{1}{n}}
    > \frac{m-1}{1-\frac{1}{m}} = m, $$
  from which, using (\ref{a+b}), we obtain
 $$ a + b \ge n + b/m > n + 1. $$
 
 Therefore we need at least $2n+m+2$ colors. Hence $\sci(K_{1,n} \Box K_{1,m}) = 2n+m+2$.
\end{proof}


\section{Cartesian product of a tree and a cycle}
In this section we give lower and upper bounds for the strong chromatic index of the Cartesian product of a tree and a cycle. Also we determine the exact value of that, if the length of the cycle is $4\ell$ for some $\ell \geq 1$.

Throughout this section $T$ means any tree, and
 $\Delta$ stands for its maximum degree, as a shorthand for $\Delta(T)$.

\begin{lemma}\label{cycle-lower}
Let $C$ be any cycle of length at least $4$. Then $\sci(T \Box C) \geq 2\Delta+4$.
\end{lemma}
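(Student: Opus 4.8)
The plan is to find a vertex $u$ in $T$ of degree $\Delta$ together with a carefully chosen set of edges incident to or near a single fiber, all of which must receive pairwise distinct colors. Pick a vertex $u\in V(T)$ with $d_T(u)=\Delta$, let $w$ be a neighbor of $u$ in $T$, and fix three consecutive vertices $v_0,v_1,v_2$ on the cycle $C$ (this is where $\ell\geq 4$ is used, so that $v_0\neq v_2$). In the $T$-fiber over $v_1$ we have the $\Delta$ edges $(u:v_1,u':v_1)$ for the $\Delta$ neighbors $u'$ of $u$ in $T$; in the $C$-fiber over $u$ we have the two edges $(u:v_0,u:v_1)$ and $(u:v_1,u:v_2)$. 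All these $\Delta+2$ edges share the vertex $u:v_1$, so they form a star and need $\Delta+2$ distinct colors. The task is to exhibit $\Delta+2$ further edges, none incident to $u:v_1$, each of which is at distance $\le 1$ (in the strong-coloring sense) from all of the above and from one another, forcing a total of $2\Delta+4$.

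The key observation is to look at the two neighboring $T$-fibers, over $v_0$ and over $v_2$. Consider the $\Delta$ edges $(u:v_0,u':v_0)$ in the fiber over $v_0$: each such edge is joined to $(u:v_0,u:v_1)$ (which is itself joined to everything in the star at $u:v_1$), and the edges within this set pairwise share $u:v_0$; moreover $(u:v_0,u':v_0)$ is adjacent to the edge $(u:v_1,u':v_1)$? No — I must be careful: $(u:v_0,u':v_0)$ and $(u:v_1,u':v_1)$ are joined by the fiber edge $(u':v_0,u':v_1)$, so they are at strong-coloring distance $1$ and hence must differ. Thus the $\Delta$ edges over $v_0$, the $\Delta$ edges over $v_1$, the two $C$-fiber edges at $u$, plus the two edges $(w:v_0,w:v_1)$ and $(w:v_1,w:v_2)$ — where $w$ is the fixed neighbor of $u$ — all need distinct colors, provided one checks every relevant pair is within strong-coloring distance $1$. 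A short case analysis: any edge over $v_0$ versus any edge over $v_1$ differ via a connecting $C$-fiber edge; the $C$-fiber edges at $u$ versus the $T$-fiber edges at $u:v_1$ or $u:v_0$ are adjacent or connected through $u:v_1$; and $(w:v_0,w:v_1)$ is adjacent to $(u:v_0,w:v_0)$ which forces its interaction with the rest. Counting: $\Delta$ (over $v_1$) $+ \Delta$ (over $v_0$) $+ 2$ ($C$-fiber at $u$) $+ 2$ (the two $w$-edges) $= 2\Delta+4$, once we confirm these $2\Delta+4$ edges are mutually at strong distance $\le 1$.

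I would organize the write-up as: (1) name $u,w,v_0,v_1,v_2$; (2) list the $2\Delta+4$ edges explicitly in three groups (fiber-over-$v_0$, fiber-over-$v_1$, and the four "connector" edges $(u:v_0,u:v_1),(u:v_1,u:v_2),(w:v_0,w:v_1),(w:v_1,w:v_2)$); (3) verify that every pair among these is either adjacent or joined by a third edge of $T\Box C$, so no two may share a color; (4) conclude $\sci(T\Box C)\ge 2\Delta+4$. The main obstacle is step (3): the verification must be exhaustive, and in particular one must handle the pairs involving the two $w$-edges and the pairs of edges lying in different $T$-fibers with distinct "endpoints" $u',u''$ — for such a pair $(u:v_0,u':v_0)$ and $(u:v_1,u'':v_1)$ with $u'\neq u''$, adjacency of $u:v_0$ with $u:v_1$ does the job, so it is fine, but the bookkeeping needs to be laid out cleanly. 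One must also double-check that picking $w$ among the neighbors of $u$ (rather than a vertex farther away) keeps the two $w$-edges within distance $1$ of the star at $u:v_1$; this holds because $(u:v_1,w:v_1)$ is an edge, joining the star to the $C$-fiber over $w$. If $\Delta=1$ (i.e. $T=K_2$), the statement still holds and should be noted as the base/degenerate case, essentially reducing to Togni's formula for $C_m\Box K_2$.
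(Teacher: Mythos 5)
Your candidate set of $2\Delta+4$ edges is not pairwise conflicting, so the argument as outlined does not establish the lower bound. Concretely, take a neighbour $u'$ of $u$ with $u'\neq w$ (such a $u'$ exists whenever $\Delta\ge 2$) and compare the edge $(u:v_0,u':v_0)$ from your fiber-over-$v_0$ group with the connector edge $(w:v_1,w:v_2)$. These two edges share no endpoint, and none of the four cross pairs of endpoints is adjacent in $T\Box C$: the $T$-coordinates $u$ and $u'$ both differ from $w$, and the $C$-coordinate $v_0$ is distinct from, and not adjacent to, $v_2$ (the cycle has length at least $4$), and distinct from $v_1$. Hence a strong edge coloring may legally give these two edges the same color, and the exhaustive verification you defer to step (3) fails exactly at these pairs. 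The count $2\Delta+4$ is therefore not justified by this edge set.

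The lemma is nonetheless easy to save, and the paper does it with much less work: since $C$ contains a path on four vertices, $T\Box P_4$ is a subgraph of $T\Box C$, and $\sci$ is monotone under taking subgraphs; Corollary \ref{cor:path} (itself a consequence of the general lower bound in Theorem \ref{lowerBound}) gives $\sci(T\Box P_4)=2\Delta+4$. If you prefer a self-contained clique argument in the spirit of yours, replace your edge set by the one implicit in Theorem \ref{lowerBound}: take all $2\Delta+3$ edges incident to $u:v_0$ or $u:v_1$ (these pairwise conflict because $(u:v_0,u:v_1)$ is itself an edge), and add the single edge $(w:v_0,w:v_1)$, which conflicts with every edge at $u:v_0$ via the edge $(w:v_0,u:v_0)$ and with every edge at $u:v_1$ via $(w:v_1,u:v_1)$. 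This yields $2\Delta+4$ pairwise conflicting edges with a one-line verification, and in particular handles $\Delta=1$ without a separate base case.
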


\begin{proof}

Corollary \ref{cor:path} says that the strong chromatic index of the Cartesian product of $T$ and a path longer than one is $2\Delta+4$. Now $C$ contains a path with $3$ vertices. Therefore $\sci(T \Box C) \geq 2\Delta+4$.
\end{proof}

This inequality holds with equality if the length of the cycle is divisible by $4$. We prove this in the next theorem.

\begin{theorem}\label{tree-tree}
The strong chromatic index of $T \Box C_{4\ell}$ is $2\Delta+4$.
\end{theorem}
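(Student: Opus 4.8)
By Lemma~\ref{cycle-lower} it suffices to exhibit a strong edge coloring of $T \Box C_{4\ell}$ using $2\Delta+4$ colors. The plan is to partition the palette into two groups: a set $F$ of $2\Delta$ colors reserved for the $T$-fibers, and a set $R$ of $4$ colors reserved for the $C_{4\ell}$-fibers (the ``cycle edges''). The point of the $4\ell$ hypothesis is that a $C_{4\ell}$-fiber, being a cycle whose length is a multiple of $4$, has strong chromatic index exactly $4$, and moreover admits a strong $4$-edge-coloring with a convenient periodic structure: label the vertices of $C_{4\ell}$ by $\mathbb{Z}_{4\ell}$ and color the edge between $j$ and $j+1$ by the residue of $j$ modulo $4$. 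I would fix this pattern on every $C_{4\ell}$-fiber simultaneously (the same color on the edge from $u\!:\!j$ to $u\!:\!(j{+}1)$ for every vertex $u$ of $T$). One then checks that no conflict arises among cycle edges of different fibers: two cycle edges in fibers over $u$ and $u'$ can conflict only if $u\sim u'$ in $T$, and the periodic pattern was designed so that edges at the same ``height'' $j$ get the same color while edges at heights differing by $1$ or $2$ get distinct colors --- exactly the data of a strong edge coloring of $C_{4\ell}$ itself --- so this half of the coloring is automatically strong, and it uses no color of $F$.

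Next I would color the $T$-fiber edges using the $2\Delta$ colors of $F$. Here the idea is to re-use the machinery already developed for trees. Each $T$-fiber is a copy of $T$, and the constraint between a $T$-fiber edge $(u\!:\!j, u'\!:\!j)$ and another $T$-fiber edge depends, as in the proof of Theorem~\ref{ub}, on the parity of the distance in $C_{4\ell}$ between the two heights together with adjacency/distance-$2$ relations in $T$. Because $C_{4\ell}$ is bipartite, ``distance in $C_{4\ell}$ is even'' is a well-defined equivalence relation on heights, splitting $\mathbb{Z}_{4\ell}$ into the even heights and the odd heights. I would impose the analogue of conditions~\ref{ac} and~\ref{b} from Theorem~\ref{ub}: all $T$-fiber edges use colors from $F$, and the color of $(u\!:\!j,u'\!:\!j)$ equals the color of $(u\!:\!j',u'\!:\!j')$ whenever $d_{C_{4\ell}}(j,j')$ is even. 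With this invariant in force, the colored cycle edges impose no restriction on the $T$-fiber edges beyond what is already felt ``within a single parity class'', because a conflict coming through a cycle edge from $u\!:\!j$ to $u\!:\!(j{+}1)$ links a $T$-fiber edge at an even height to one at an odd height, and the invariant keeps these two families from ever being forced to differ through that channel --- or rather, it reduces the whole problem to strongly coloring two independent copies of $T$ (the even-height layer and the odd-height layer), each a tree, with an extra condition relating an edge to its distance-$2$ mates, which is precisely what the inductive tower argument in Theorem~\ref{ub} handles. Running that argument (visiting representative vertices of $T$ in BFS order from a leaf root, giving the newly exposed $T$-fiber edges colors from $F$ avoiding the at most $\Delta-1$ forbidden ones, then propagating the color to all distance-$2$ copies in the cycle direction) yields a valid assignment within the budget $2\Delta$, since at each vertex of $T$ only $\Delta-1$ previously colored fiber edges plus one cross edge can interfere, leaving enough room among $2\Delta$ colors.

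\textbf{Main obstacle.} The delicate point is the interaction at heights where the ``distance-$2$ in the cycle'' relation wraps around: in $C_{4\ell}$ the distance-$2$ neighbors of height $j$ are $j\pm 2$, and near the whole cycle these chain together, so one must verify that propagating a fixed $F$-color to \emph{all} heights of the same parity is globally consistent and does not create a monochromatic $P_3$ or a chord-induced conflict among the $4\ell/2$ copies of a given $T$-fiber edge. Because all those copies genuinely get the same color, the only thing to check is that no two of them lie at $C_{4\ell}$-distance $\le 2$, i.e.\ that they form an induced matching inside the relevant $T$-fiber-product strip; this is where divisibility by $4$ is used, as a cycle of length $2 \pmod 4$ would force two same-parity heights at distance $2$ to be ``adjacent in the square'' in the wrong way and break the scheme. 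I expect the bulk of the write-up to be the careful bookkeeping of which previously colored edges lie at distance $\le 2$ from a newly colored $T$-fiber edge --- edges in the parent fiber, edges at the two adjacent heights, and the incident cycle edges --- mirroring the claim inside the proof of Theorem~\ref{ub}, and confirming that in every case at most $2\Delta-1$ colors are excluded. Once that is in place, combining the $F$-coloring of the $T$-fibers with the fixed $R$-coloring of the $C_{4\ell}$-fibers gives a strong edge coloring with $2\Delta+4$ colors, matching the lower bound of Lemma~\ref{cycle-lower}.
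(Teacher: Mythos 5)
Your overall strategy --- reserving $2\Delta$ colors for the $T$-fibers propagated by the even-distance invariant of Theorem~\ref{ub}, plus $4$ colors arranged periodically around the $C_{4\ell}$-fibers --- is the same as the paper's, but your treatment of the cycle edges contains a genuine error. You propose to give the edge from $u\!:\!j$ to $u\!:\!(j{+}1)$ the color $j \bmod 4$ \emph{for every vertex $u$ of $T$}, and you assert that edges at the same height getting the same color makes this half of the coloring ``automatically strong.'' That is false: if $u\sim u'$ in $T$, then $u\!:\!j$ is adjacent to $u'\!:\!j$, so the two cycle edges $(u\!:\!j,\,u\!:\!(j{+}1))$ and $(u'\!:\!j,\,u'\!:\!(j{+}1))$ are opposite edges of a $4$-cycle joined by the $T$-fiber edge $(u\!:\!j,\,u'\!:\!j)$; a strong edge coloring must give them \emph{different} colors. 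This is exactly condition~\ref{a} of Theorem~\ref{ub}: cycle edges at the same height over two tree vertices may share a color only when the tree distance between those vertices is \emph{even}. A single global period-$4$ pattern therefore cannot work.

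The repair is what the paper does: root $T$ at a vertex $a$ and shift the period-$4$ pattern by $2$ on every fiber whose base vertex is at odd distance from $a$ (Fact~\ref{fact1}: the edge $(u\!:\!t_i,u\!:\!t_{i+1})$ receives $2\Delta+1+((i-1)\bmod 4)$ or $2\Delta+1+((i-3)\bmod 4)$ according to the parity of $d_T(u,a)$). One then checks that a shift by $2$ still avoids all conflicts between adjacent fibers at heights differing by $0$ or $1$, and that closing the cycle with the edge from $t_{4\ell}$ to $t_1$ is consistent precisely because $4\mid 4\ell$. Your $T$-fiber half is essentially sound --- it is the invariant \ref{ac}/\ref{b} argument from Theorem~\ref{ub}, and the wrap-around there is harmless since parallel tree-fiber edges at same-parity heights sit at cycle distance at least $2$, where color sharing is permitted (your stated requirement of distance strictly greater than $2$ is stronger than needed and is in fact violated by your own construction) --- but without the parity shift on the cycle fibers the proposed coloring is not strong.
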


\begin{proof}
The above lemma says that $\sci(T \Box C_{4\ell}) \geq 2\Delta+4$. Now we show that there is a  strong edge coloring on  $T \Box C_{4\ell}$  that uses $2\Delta+4$ colors. 


Let $a$ be a vertex in $T$ and set $C_{4\ell} = t_1t_2...t_{4\ell}t_1$.
The proof of Theorem \ref{ub} says that there is a strong edge coloring $c$ on $T \Box (C_{4\ell} \setminus \{(t_{4\ell},t_1)\})$ with colors $1,2,3,...,2\Delta+4$ such that $c$ satisfies the following properties:

\begin{enumerate}[label=(\roman*)]
\item\label{cira} All the edges of $T \times V(C_{4\ell})$ use color from $\{1,2,3,...,2\Delta\}$.
\item\label{cirb} All the edges of $V(T) \times C_{4\ell}$ use color from $\{2\Delta+1, 2\Delta+2, 2\Delta+3, 2\Delta+4\}$.
\item\label{circ} For every $(u,u')\in E(T), i,j \in [4\ell-2]$, if $2|(i-j)$, then $c((u:t_i,u':t_i)) = c((u:t_j,u':t_j))$, .
\item\label{cird} If the distance from $u$ to $u'$ is even in $T$, then $c((u:t_i,u:t_{i+1})) = c((u':t_i,u':t_{i+1}))$, for every $i \in [4\ell-1]$.
\end{enumerate}

Let $b$ be a vertex adjacent to $a$ in $T$.
The definition of strong edge coloring ensures that the colors of the edges $(a:t_1,a:t_2),(a:t_2,a:t_3),(b:t_1,b:t_2)$ and $(b:t_2,b:t_3)$ are distinct.
Again according to \ref{cirb} they are $2\Delta+1, 2\Delta+2, 2\Delta+3, 2\Delta+4$. Without loss of generality we can assume that
$c((a:t_1,a:t_2)) = 2\Delta+1, c((a:t_2,a:t_3)) = 2\Delta+2, c((b:t_1,b:t_2)) = 2\Delta+3$ and $c((b:t_2,b:t_3)) = 2\Delta+4$.



 

Now the color of $(a:t_3,a:t_4)$ cannot be that of $(a:t_1,a:t_2),(a:t_2,a:t_3)$ and $(b:t_2,b:t_3)$. Hence it is $2\Delta+3$.
Repeating this argument again and again, together with \ref{cird} it leads to the following fact:
\begin{fact}\label{fact1}
 $c((u:t_i,u:t_{i+1})) = 2\Delta+1+( i-1 \bmod 4)$ if $d_{T}(u,a)$ is even, otherwise it is $2\Delta+1+( i-3 \bmod 4) $.
\end{fact}

We can extend this coloring to  $T \Box C_{4\ell}$ by defining

$$c((u:t_{4\ell},u:t_1)) :=
\begin{cases}
2\Delta+2, \,\text{ if } d_T (u,a) \text{ is odd }\\
2\Delta+4, \,\text{ otherwise }\\
\end{cases}$$

We claim that this is a strong edge coloring.
Since the coloring $c$ satisfies \ref{cira}, \ref{cirb},  \ref{circ} and \ref{cird}, it is enough to show the following three properties for every vertex $u$ of $T$ and any two  vertices $v$ and $v'$ adjacent to it.
\begin{enumerate}[label=(\alph*)]
\item\label{e1} $c((u:t_1,v:t_1)) \neq c((u:t_{4\ell},v':t_{4\ell}))$.
\item\label{e2} $c((u:t_1,u:t_2)) \not\in \{ c((u:t_{4\ell},u:t_1)),c((v:t_{4\ell},v:t_1)),
c((u:t_{4\ell-1},u:t_{4\ell}))\}$
\item\label{e3} $c((u:t_{4\ell},u:t_1)) \not\in \{  c((u:t_{4\ell-2},u:t_{4\ell-1})), c((u:t_2,u:t_3)), c((v:t_{4\ell},v:t_1))\}$
\end{enumerate}

Now \ref{circ} says that $c((u:t_1,v:t_1)) = c((u:t_{4\ell-1},v:t_{4\ell-1}))$.
Again $c$ is a strong edge coloring on $T \Box (C_{4\ell} \setminus \{(t_{4\ell},t_1)\}$.
Hence \ref{e1} holds true.
Moreover,  Fact~\ref{fact1} says that \ref{e2} and \ref{e3} are valid, too.
\end{proof}

The above theorem determines the strong chromatic index of the Cartesian product of a tree and a cycle whose length is a multiple of $4$. 
Next we find an upper bound for the Cartesian product of a tree with a cycle of any even length. 
We state this for the $6$-cycle first, and then in Theorem \ref{largeevencycle} we find a better bound for the remaining even cycles.

\begin{theorem}   \label{t:cycle6}
The strong chromatic index of $T \Box C_6$ is at most $2\Delta(T)+6$.
\end{theorem}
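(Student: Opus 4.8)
The plan is to follow the same tower-of-subgraphs strategy as in Theorem~\ref{ub} and Theorem~\ref{tree-tree}, but adapted to the fact that $C_6$ has length $6$, which is $2 \pmod 4$ rather than $0 \pmod 4$. First I would invoke the proof of Theorem~\ref{ub} applied to $T \Box P_6$ (the path on the six vertices $t_1,\dots,t_6$ obtained from $C_6$ by deleting the edge $(t_6,t_1)$), together with the two extra colors, to obtain a strong edge coloring $c$ of $T \Box (C_6 \setminus \{(t_6,t_1)\})$ using the colors $1,\dots,2\Delta+4$ and satisfying the four structural properties \ref{cira}--\ref{cird} (with $4\ell$ replaced by $6$): the $T$-fiber edges use colors $1,\dots,2\Delta$; the $C_6$-fiber edges use colors $2\Delta+1,\dots,2\Delta+4$; along each $C_6$-fiber the colors of edges two steps apart agree; and fibers at even distance in $T$ receive matching colors. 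As in Theorem~\ref{tree-tree}, normalize so that $c((a:t_1,a:t_2)),\dots$ take the values $2\Delta+1,\dots,2\Delta+4$ in the natural way, and deduce the analogue of Fact~\ref{fact1} describing the cyclic color pattern $2\Delta+1,2\Delta+2,2\Delta+3,2\Delta+4,2\Delta+1,2\Delta+2$ along each six-vertex fiber.

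The obstruction is exactly that $6 \not\equiv 0 \pmod 4$: closing up the cycle with the edge $(t_6,t_1)$ creates conflicts that the four colors $\{2\Delta+1,\dots,2\Delta+4\}$ cannot absorb, because the edge $(u:t_6,u:t_1)$ must differ from $(u:t_5,u:t_6)$, from $(u:t_1,u:t_2)$, from $(u:t_2,u:t_3)$ (at $P_4$-distance through $u:t_1$), and from the corresponding edge $(v:t_6,v:t_1)$ in every neighboring fiber. Writing out the forced colors on $t_5t_6$, $t_1t_2$, $t_2t_3$ from Fact~\ref{fact1}, one sees that these already use three of the four available colors in each parity class, leaving no consistent choice — this is precisely why $2\Delta+4$ colors suffice only when $4 \mid$ length. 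The fix is to spend two additional colors, $2\Delta+5$ and $2\Delta+6$, dedicated to the "seam'' edges $\{(u:t_6,u:t_1) \mid u \in V(T)\}$: assign color $2\Delta+5$ to these edges when $d_T(u,a)$ is even and color $2\Delta+6$ when $d_T(u,a)$ is odd. Since the two new colors are used only on $C_6$-fiber edges crossing the seam, and any two such edges $(u:t_6,u:t_1)$, $(u':t_6,u':t_1)$ with $u \sim u'$ get different colors (different parities), while edges at larger distance in $T$ are strongly independent, this assignment is compatible.

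It then remains to verify that the extended coloring on all of $T \Box C_6$ is a strong edge coloring. I would argue exactly as in Theorem~\ref{tree-tree}: because $c$ already satisfies \ref{cira}--\ref{cird} and is a strong edge coloring on $T \Box (C_6 \setminus \{(t_6,t_1)\})$, the only potential violations involve the seam edges. There are three cases to check for a vertex $u$ of $T$ and neighbors $v,v'$: (a) a $T$-fiber edge at $t_1$ versus one at $t_6$ — handled by \ref{circ} since $c((u:t_1,v:t_1)) = c((u:t_5,v:t_5))$ and $c$ was already proper on the path; (b) the $C_6$-fiber edge $(u:t_1,u:t_2)$ against $(u:t_6,u:t_1)$, $(v:t_6,v:t_1)$, $(u:t_5,u:t_6)$ — here $(u:t_1,u:t_2)$ uses a color in $\{2\Delta+1,\dots,2\Delta+4\}$ while the seam edges use $\{2\Delta+5,2\Delta+6\}$, so no conflict; (c) the analogous check at $t_6$. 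In each case the new colors $2\Delta+5,2\Delta+6$ are disjoint from the old palette on the relevant neighboring edges, so no $P_4$ receives a repeated color. The main work — really a bookkeeping exercise rather than a genuine obstacle — is spelling out case (b)/(c) precisely and confirming that two seam edges in adjacent fibers always differ; once that is done, the coloring uses $2\Delta+6$ colors and the bound $\sci(T \Box C_6) \le 2\Delta(T)+6$ follows.
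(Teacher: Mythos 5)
There is a genuine gap: the coloring you construct is not a strong edge coloring. Under the pattern of Fact~\ref{fact1}, the five path edges of each $C_6$-fiber receive the period-four sequence of colors, so for a vertex $u$ with $d_T(u,a)$ even the edges $(u:t_1,u:t_2)$ and $(u:t_5,u:t_6)$ \emph{both} get color $2\Delta+1$ (and for odd parity both get $2\Delta+3$). In the path $T\Box(C_6\setminus\{(t_6,t_1)\})$ these two edges are far apart, but the moment you insert the seam edge $(u:t_6,u:t_1)$ they become joined by it --- they are the outer edges of the $P_4$ on $u:t_2,\,u:t_1,\,u:t_6,\,u:t_5$ --- and hence must receive distinct colors. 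Your case (b) actually lists the pair $(u:t_1,u:t_2)$ versus $(u:t_5,u:t_6)$, but then dismisses it by asserting that ``the seam edges use $\{2\Delta+5,2\Delta+6\}$''; the edge $(u:t_5,u:t_6)$ is not a seam edge, it keeps its old color, and the palettes are not disjoint. So spending the two new colors only on the seam cannot work: the obstruction created by $6\not\equiv 0\pmod 4$ lives on the old edges, not on the new one. (A local repair by also recoloring $(u:t_5,u:t_6)$ fails too, since that edge conflicts with edges carrying every one of the six colors.)

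The paper avoids this by \emph{discarding} the four-color pattern on the cycle fibers altogether and recoloring every $C$-fiber edge with a period-three pattern: $c'((u:t_i,u:t_{i+1}))=2\Delta+1+(i\bmod 3)$ when $d_T(u,a)$ is even and $2\Delta+4+(i\bmod 3)$ when it is odd. Because $3\mid 6$ this pattern closes up consistently around the cycle, any two conflicting edges within a fiber (consecutive, or two apart) get distinct colors, and adjacent fibers use disjoint triples of colors; the $T$-fiber edges keep their colors from the path construction, and condition~\ref{i-i+2} handles the new $T$-fiber conflicts across the seam exactly as in your case (a). That last part of your argument is sound; it is the treatment of the cycle-fiber edges that needs to be replaced.
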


\begin{proof}
Let $a$ be a vertex in $T$, and set $C_6 = t_1t_2t_3t_4t_5t_6t_1$.
Consider the strong edge coloring $c$ on $T \Box (C_6 \setminus \{(t_6,t_1)\})$ as described in Theorem \ref{tree-tree}.
It follows that the coloring $c$ satisfies the following properties:
\begin{enumerate}[label=(\roman*)]
\item All the the edges of $T \times V(C_6)$ use colors from $\{1,2,3,...,2\Delta\}$.
\item\label{i-i+2} $c((u:t_i,u':t_i)) = c((u:t_j,u':t_j))$, whenever $2|(i-j)$.
\end{enumerate}

We now extend the coloring to obtain the color function $c'$ on the edges of $T\Box C_6$ as follows. Let $c'((u:t_i,v:t_i))=c((u:t_i,v:t_i))$, for all $i$ and $(u,v) \in E(T)$. For every vertex $u$ in $T$, define  $c'((u:t_i,u:t_{i+1}))$ to be $2\Delta+1+(i \mod 3)$ if $d_T(u,a)$ is even, else let $2\Delta+4+(i \mod 3)$.

Thus for any two adjacent vertices $u$ and $v$ in $T$, we see $c'((u:t_i,u:t_{i+1})) \not\in \{c'((v:t_i,v:t_{i+1})), c'((u:t_{i+1},u:t_{i+2})),c'((v:t_{i+1},v:t_{i+2})),
c'((u:t_{i+2},u:t_{i+3}))\}$.
Again $c$ is a strong edge coloring and it satisfies \ref{i-i+2}. So $c'((u:t_i,v:t_i)) \neq c'((u:t_j,v:t_j))$, for all $t_i$ adjacent to $t_j$. Hence $c'$ is a strong edge coloring.  
\end{proof}

This bound is tight. The strong chromatic bound of $C_6 \Box P_2$ is $8$. On the other hand there are also graphs whose product does not attend this bound. The strong chromatic index of $C_6 \Box P_3$ is $9$. 

\begin{theorem}\label{largeevencycle}
Let $\ell >3$. Then $\sci(T \Box C_{2\ell}) \leq 2\Delta(T)+5$.
\end{theorem}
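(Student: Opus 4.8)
The plan is to build on the idea of Theorem~\ref{tree-tree}, but since $2\ell$ is not a multiple of $4$ we cannot reuse the $4$-periodic pattern $2\Delta+1,\dots,2\Delta+4$ along the cycle and have it close up consistently. Instead I would allow one extra color class on the cycle-fibers, giving us a budget of five colors $\{2\Delta+1,\dots,2\Delta+5\}$ for all edges of $V(T)\times C_{2\ell}$, while the $T$-fibers continue to use only $\{1,\dots,2\Delta\}$ exactly as in the proof of Theorem~\ref{ub}. The point is that an odd-length-friendly coloring of a single cycle $C_{2\ell}$ (viewed as $C_{2\ell}\Box K_1$) already needs a pattern that is not purely $4$-periodic; Togni's results, or a direct check, show $\sci(C_{2\ell})\le 5$ for $\ell>3$ with a near-periodic pattern that deviates from period $4$ in just one spot.

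First I would take the strong edge coloring $c$ on $T\Box(C_{2\ell}\setminus\{(t_{2\ell},t_1)\})$ supplied by Theorem~\ref{ub}/Theorem~\ref{tree-tree}, which uses colors $1,\dots,2\Delta$ on the $T$-fibers (property \ref{cira}) and colors $2\Delta+1,\dots,2\Delta+4$ on the path-fibers, with the parity-coupling properties \ref{circ} and \ref{cird}. By Fact~\ref{fact1}, along the open path the cycle-fiber edge $(u:t_i,u:t_{i+1})$ receives color $2\Delta+1+((i-1)\bmod 4)$ or $2\Delta+1+((i-3)\bmod 4)$ according to the parity of $d_T(u,a)$. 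The conflict when we try to add the edges $(u:t_{2\ell},u:t_1)$ is that the colors of $(u:t_{2\ell-1},u:t_{2\ell})$, of the wrapped edge $(u:t_1,u:t_2)$, and of the companion wrapped edges in neighboring $T$-fibers are all forced, and with only four colors available they collide once $2\ell\not\equiv 0\pmod 4$. The fix is to recolor the last few cycle-fiber edges (say those incident with $t_{2\ell-2},t_{2\ell-1},t_{2\ell},t_1$) using the fifth color $2\Delta+5$ in a short ``patch'' that absorbs the parity mismatch, exactly as one patches a near-$4$-periodic sequence on $C_{2\ell}$; then define $c'((u:t_{2\ell},u:t_1))$ by the two-case rule depending on the parity of $d_T(u,a)$, chosen to be consistent with the patched values. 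The condition $\ell>3$ is what gives enough room (at least $2\ell\ge 10$ vertices on the cycle) so that the patch near the wrap-around does not interfere with itself around the far side.

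The verification step splits, just as in Theorem~\ref{tree-tree}, into: (a) no conflict between a $T$-fiber edge at one level and a cycle-fiber edge, which is automatic since the two color palettes $\{1,\dots,2\Delta\}$ and $\{2\Delta+1,\dots,2\Delta+5\}$ are disjoint; (b) no conflict between two $T$-fiber edges $(u:t_i,v:t_i)$ and $(u:t_j,v:t_j)$ with $t_i\sim t_j$, which follows from property \ref{circ}/\ref{i-i+2} since consecutive indices have opposite parity and the proof of Theorem~\ref{ub} already guaranteed adjacent-level copies of a $T$-edge get distinct colors; and (c) no conflict among cycle-fiber edges within distance $2$ of each other, which is where the patched pattern must be checked. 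For (c) I would reduce to a finite case check: an edge $(u:t_i,u:t_{i+1})$ interacts only with cycle-fiber edges of the form $(u:t_{i\pm1},u:t_{i\pm2})$, $(u:t_{i+1},u:t_{i+2})$, $(u:t_{i-1},u:t_i)$, and with $(v:t_i,v:t_{i+1})$, $(v:t_{i+1},v:t_{i+2})$ for neighbors $v$ of $u$; property \ref{cird} handles the $v$-copies (even distance) and the definition-by-parity handles odd-distance $v$, reducing everything to the single-cycle statement that the patched $5$-coloring of $C_{2\ell}$ is strong.

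The main obstacle I expect is getting the patch near the wrap-around edge $(t_{2\ell},t_1)$ to be simultaneously consistent with (i) the already-fixed colors coming from Fact~\ref{fact1} on the ``interior'' of the path, (ii) the parity-coupling \ref{cird} between $u$-fibers and $v$-fibers of opposite distance-parity, and (iii) the strong-coloring constraints across the three or four edges surrounding the wrap. Each of these is individually easy, but balancing all three with only five colors is the delicate combinatorial core; I would resolve it by writing out the explicit color pattern for the last five or six cycle positions in each parity class (a short table), checking the $O(1)$ adjacency constraints by hand, and then invoking the periodicity of the rest of the cycle. Everything else — the $T$-direction coloring and the palette-disjointness arguments — is identical to what was done in Theorems~\ref{ub} and~\ref{tree-tree}.
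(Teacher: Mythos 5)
Your overall strategy is the same as the paper's: keep the $2\Delta$ colors on the tree-direction edges exactly as supplied by the proof of Theorem~\ref{ub}, and give the cycle-direction edges five colors depending only on the position $i$ along the cycle and the parity of $d_T(u,a)$. This reduces the whole theorem to producing two cyclic sequences $\alpha,\beta:\mathbb{Z}_{2\ell}\to\{1,\dots,5\}$ such that within each sequence any two terms at cyclic distance at most $2$ differ, and $\alpha_i\notin\{\beta_{i-1},\beta_i,\beta_{i+1}\}$ for all $i$. The paper does this by discarding the path-product's coloring of the $C$-fiber edges altogether and writing down an explicit partition of the sets $A_i,B_i$ into five induced matchings; you propose instead to retain the $4$-periodic pattern of Fact~\ref{fact1} and patch it locally near the wrap-around edge. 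Your reductions (palette disjointness, the tree-direction check via the periodicity properties, the reduction of the cycle-direction check to the two-sequence problem) are all sound. The gap is that the one step you defer --- the explicit five-color pattern and its finite verification --- is the entire mathematical content of the theorem beyond Theorem~\ref{tree-tree}, and your proposed form of it fails as stated.

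Concretely, a patch confined to the wrap does not exist: if $\alpha_i=(i-1)\bmod 4$ and $\beta_i=(i+1)\bmod 4$ are kept for all $i\le 2\ell-2$, then $\alpha_{2\ell-1}$ must avoid $\alpha_{2\ell-3},\alpha_{2\ell-2},\alpha_1,\beta_{2\ell-2}$, which are the four old colors, so $\alpha_{2\ell-1}$ is forced to be the fifth color; but then $\beta_{2\ell-1}$ must avoid $\beta_{2\ell-3},\beta_{2\ell-2},\beta_1,\alpha_{2\ell-2}$ (again all four old colors) together with $\alpha_{2\ell-1}$, i.e.\ all five colors. So the modification must reach well past the last ``five or six positions,'' and it is not self-evident without an explicit table that it can be done at all; this finite check is precisely where such proofs live or die (indeed the paper's own displayed partition needs care at the seam, e.g.\ $A_1$ and $A_{2\ell-1}$ land in the same class although $t_{2\ell}\sim t_1$). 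A clean way to complete your argument: take $\beta_i:=\alpha_{i+2}$, which turns all the cross-conditions into $\alpha_i\ne\alpha_{i+3}$, so it suffices that every four cyclically consecutive terms of $\alpha$ be distinct; such an $\alpha$ on $\mathbb{Z}_{2\ell}$ with $2\ell\equiv 2\pmod 4$, $2\ell\ge 10$, is obtained from the periodic word $(1,2,3,4)$ by inserting the symbol $5$ after two occurrences of $4$ at cyclic distance at least $4$ apart. Until some such explicit pattern is written down and its $O(1)$ adjacency constraints are checked, the proof is not complete.
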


\begin{proof}
Let $a$ be a vertex in $T$, and set $C_{2\ell} = t_1t_2...t_{2\ell}t_1$.
If $\ell$ is even, then Theorem \ref{tree-tree} says that the stated bound is true. Now we may assume that $\ell$ is odd. 
In the proof of Theorem \ref{ub} we showed that there is a strong edge coloring $c$ on $T \Box (C_{2\ell} \setminus \{(t_{2\ell},t_1)\})$ with colors $1,2,3,...,2\Delta+4$ such that $c$ satisfies the following properties:

\begin{enumerate}[label=(\roman*)]
\item\label{cira2} All the edges from every $T$-fiber use colors from the set $\{1,2,3,...,2\Delta\}$.
\item\label{cirb2} All the edges from every $C$-fiber use colors from the set $\{2\Delta+1, 2\Delta+2, 2\Delta+3, 2\Delta+4\}$.
\item\label{circ2} $c((u:t_i,u':t_i)) = c((u:t_{i+2},u':t_{i+2}))$, for every $(u,u')\in E(T), i \leq 2\ell-2$.
\item\label{cird2} If the distance from $u$ to $u'$ is even in $T$, then $c((u:t_i,u:t_{i+1})) = c((u':t_i,u':t_{i+1}))$, for every $i \in [2\ell-1]$.
\end{enumerate}

Define the color function $c'$ on $T \Box V(C_{2\ell})$ as that of $c$. Now \ref{circ2} says that $c'((u:t_1,u':t_1)) = c'((u:t_{2\ell-1},u':t_{2\ell-1}))$. Hence $c'((u:t_1,u':t_1)) \neq c'((u:t_{2\ell},v:t_{2\ell}))$ and $c'((u:t_{2\ell},v:t_{2\ell})) \neq c'((u:t_{2\ell-1},v:t_{2\ell-1}))$, for any vertex $v$  adjacent to $u$ (including $u'$). Therefore $c'$ is a partial coloring on $T \Box C_{2\ell}$ that uses $2\Delta$ colors. Now we extend this coloring by using $5$ new colors.

Now the only edges which remain to be colored are the edges of $C$-fibers. We partition them into five induced matchings. 
For every $i\in [2\ell]$ define the sets $A_i := \{ (u:t_i,u:t_{i+1})\mid  d_T(u,a)$ is even$\}$ and 
$B_i := \{ (u:t_i,u:t_{i+1})\mid  d_T(u,a)$ is odd$\}$. Clearly every uncolored edges appears exactly in one of the $A_i$ or $B_i$. 
Let $(u:t_i,u:t_{i+1})$ and $(v:t_i,v:t_{i+1})$ be two edges of $A_i$. 
By the definition of the Cartesian product, $u:t_i$ is not adjacent to $v:t_{i+1}$ and $u:t_{i+1}$ is not adjacent to $v:t_i$. 
Again both $d_T(u,a)$ and $d_T(v,a)$ are even. So $u:t_i$ (respectively $u:t_{i+1}$) is not adjacent to $v:t_i$ (respectively $v:t_{i+1}$). Therefore $A_i$ is an induced matching, for every $i$. Similarly we can show that for every $i$, $B_i$ is an induced matching. Moreover it satisfies the following properties.
\begin{enumerate}
\item For $|i-j|>2 \mod(2\ell)$ the sets $A_i \cup A_j$ and $B_i \cup B_j$ are induced matchings.

\textit{Proof:} Let $(u:t_i,u:t_{i+1}) \in A_i$ and $(v:t_j,v:t_{j+1}) \in A_j$. Since $|i-j|>2 \mod(2\ell)$, the definition of the strong edge coloring says that $\{ (u:t_i,u:t_{i+1}),(v:t_j,v:t_{j+1})\}$ is an induced matching. 
Again both $A_i$ and $A_j$ are induced matchings. Hence so is $A_i \cup A_j$. Similarly we can show that $B_i \cup B_j$ is an induced matching. 
\item Let $t_i$ be not adjacent to $t_j$. Then $A_i \cup B_j$ and $A_j \cup B_i$ are induced matchings.

The proof is similar to that of $1$.
\end{enumerate}

We have assumed that $i>3$ and $2\ell \equiv 2 \mod 4$. Therefore the above mentioned properties say that the following five sets are induced matchings:
$$A_2 \cup B_4, \qquad
(\bigcup_{i \equiv 1 \mod 4} A_i) \cup (\bigcup_{i \equiv 3 \mod 4} B_i), \qquad
(\bigcup_{i \equiv 3 \mod 4} A_i) \cup (\bigcup_{i \equiv 1 \mod 4} B_i),$$

$$B_2 \cup (\bigcup_{\substack{i \geq 6 \\ i \equiv 2 \mod 4}} A_i) \cup (\bigcup_{\substack{i > 6 \\ i \equiv 0 \mod 4}} B_i), \qquad
(\bigcup_{\substack{i \geq 4 \\ i \equiv 0 \mod 4}} A_i) \cup (\bigcup_{\substack{i > 4 \\ i \equiv 2 \mod 4}} B_i).$$
\end{proof}

This bound is also tight. The strong chromatic index of $C_{10} \Box P_2$ is $7$.
The above theorem and Theorem \ref{cycle-lower} say that the strong chromatic index of the Cartesian product of a tree $T$ and an even cycle $C_{2\ell}$ of length more than $6$ is either $2\Delta(T)+4$ or $2\Delta(T)+5$.
In the next two theorems we prove an upper bound for the strong chromatic index of any tree with cycles of odd length. First we consider cycles of length more than $8$.

\begin{theorem}   \label{t:longodd}
Let $\ell >3$. Then $\sci(T \Box C_{2\ell+1}) \leq 2\Delta(T)+\lceil \frac{\Delta(T)}{\ell} \rceil +5$.
\end{theorem}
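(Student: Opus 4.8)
The plan is to adapt the strategy of Theorem~\ref{largeevencycle} (the even case $2\ell \equiv 2 \bmod 4$) to the odd cycle $C_{2\ell+1}$, paying a small extra price — the $\lceil \Delta/\ell\rceil$ term — to absorb the ``defect'' caused by the fact that $2\ell+1$ is not a multiple of $4$, so the nice periodic colouring of the $C$-fibers cannot close up. First I would fix a vertex $a\in T$, write $C_{2\ell+1}=t_1t_2\cdots t_{2\ell+1}t_1$, and invoke the proof of Theorem~\ref{ub} to obtain a strong edge colouring $c$ of $T\Box(C_{2\ell+1}\setminus\{(t_{2\ell+1},t_1)\})$ using colours $1,\dots,2\Delta+4$, with the $T$-fiber edges using $\{1,\dots,2\Delta\}$, the $C$-fiber edges using $\{2\Delta+1,\dots,2\Delta+4\}$, and the two periodicity properties \ref{circ2}, \ref{cird2}. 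Keeping the $T$-fiber colours exactly as in $c$, one checks as before that they remain valid after adding the wrap-around edges (the relevant identity is $c'((u:t_1,u':t_1))=c'((u:t_{2\ell-1},u':t_{2\ell-1}))$ from the period-2 property, which separates column $1$, column $2\ell+1$ and column $2\ell$). So only the $C$-fiber edges remain to be coloured, and the task reduces to partitioning them into as few induced matchings as possible.

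The key combinatorial object, exactly as in Theorem~\ref{largeevencycle}, is the family of sets $A_i=\{(u:t_i,u:t_{i+1}) : d_T(u,a)\text{ even}\}$ and $B_i=\{(u:t_i,u:t_{i+1}) : d_T(u,a)\text{ odd}\}$ for $i\in[2\ell+1]$ (indices mod $2\ell+1$); each is an induced matching, $A_i\cup A_j$ and $B_i\cup B_j$ are induced matchings when $|i-j|>2$ cyclically, and $A_i\cup B_j$, $A_j\cup B_i$ are induced matchings when $t_i\not\sim t_j$. For a cycle of length divisible by $4$ the $A_i$'s and $B_i$'s can be bundled into four colour classes by residue mod $4$; here the obstruction is that $2\ell+1$ is odd, so going once around the cycle the parities of the ``period-4'' pattern do not match up — there is a seam. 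The new idea must be: cut the cycle at one place into a path of $2\ell$ edges (indices $t_1,\dots,t_{2\ell+1}$), colour the $2\ell$ path-position classes $A_i/B_i$ with $4$ colours by the mod-$4$ bundling as in Theorem~\ref{largeevencycle} (this works because $2\ell$ pieces, with the appropriate ``$i\ge 4$'', ``$i\ge 6$'' truncations, behave like the even case), and then handle the single remaining position $i=2\ell+1$ — the wrap-around edges $A_{2\ell+1}\cup B_{2\ell+1}$ together with the ``leftover'' edges near the seam that could not be merged into the four classes — using $\lceil\Delta/\ell\rceil+1$ additional colours. The count $\lceil\Delta/\ell\rceil$ should come from the following: an induced matching inside a $C$-fiber that is forced to reuse a colour at positions $t_{2\ell+1}$ and, say, $t_2$ conflicts with at most the edges of one $T$-fiber column, and the $T$ has maximum degree $\Delta$, so roughly $\Delta$ wrap-around edges per fiber have to be spread over the $\ell$-length periodic pattern, giving $\lceil\Delta/\ell\rceil$ classes for the seam; the ``$+1$'' accounts for parity. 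Concretely I would define the five ``bulk'' matchings verbatim as in Theorem~\ref{largeevencycle} (namely $A_2\cup B_4$; $\bigcup_{i\equiv1}A_i\cup\bigcup_{i\equiv3}B_i$; $\bigcup_{i\equiv3}A_i\cup\bigcup_{i\equiv1}B_i$; $B_2\cup\bigcup_{i\ge6,\,i\equiv2}A_i\cup\bigcup_{i>6,\,i\equiv0}B_i$; $\bigcup_{i\ge4,\,i\equiv0}A_i\cup\bigcup_{i>4,\,i\equiv2}B_i$), all indices now understood over $[2\ell]$ rather than wrapping, and then verify that what is left uncoloured is precisely $A_{2\ell+1}\cup B_{2\ell+1}$ plus a bounded number of seam classes, which can be distributed among $\lceil\Delta/\ell\rceil$ new colours by grouping $\ell$ consecutive $C$-fiber positions at a time.

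The main obstacle — and the place where the full proof will require genuine care rather than routine checking — is the seam analysis: showing that after the five bulk classes are used, the uncoloured wrap-around edges really can be packed into only $\lceil\Delta/\ell\rceil+1$ induced matchings, and in particular that no wrap-around edge conflicts with a bulk class in a way that forces a sixth bulk colour. This hinges on a careful bookkeeping of which $(u:t_i,u:t_{i+1})$ edges are adjacent or at strong-colouring distance $2$ across the seam, combined with the observation that two edges $(u:t_{2\ell+1},u:t_1)$ and $(u':t_{2\ell+1},u':t_1)$ in the same wrap-around position are non-adjacent and strongly independent unless $u\sim u'$ in $T$ — so a single colour can serve an entire independent-in-$T$ slab of wrap-around edges, and only the $\Delta$ branching neighbours of any vertex need distinct colours, spread over an $\ell$-periodic template. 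I would also need to double-check the small-case hypothesis $\ell>3$ is genuinely used (it guarantees $2\ell+1>8$, so the mod-$4$ truncations ``$i\ge6$'' etc.\ leave enough room), and flag the boundary cases $C_7, C_9$ for the next theorem as the excerpt indicates.
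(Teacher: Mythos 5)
There is a genuine gap, and it sits at the very first step. You propose to keep the $T$-fiber colouring produced in the proof of Theorem~\ref{ub} (using only the colours $1,\dots,2\Delta$) and claim it ``remains valid after adding the wrap-around edges.'' For an \emph{odd} cycle this fails. The periodicity property you invoke forces $c((u:t_i,u':t_i))=c((u:t_j,u':t_j))$ whenever $i\equiv j\pmod 2$; since $1$ and $2\ell+1$ are both odd, the columns $t_1$ and $t_{2\ell+1}$ carry \emph{identical} $T$-fiber colourings. But $t_1\sim t_{2\ell+1}$ in $C_{2\ell+1}$, so the edges $(u:t_1,u':t_1)$ and $(u:t_{2\ell+1},u':t_{2\ell+1})$ are joined by the edge $(u:t_1,u:t_{2\ell+1})$ and may not share a colour in a strong edge colouring. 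The identity you quote, $c'((u:t_1,u':t_1))=c'((u:t_{2\ell-1},u':t_{2\ell-1}))$, is precisely the argument that works for the even cycle $C_{2\ell}$, where the last column $t_{2\ell}$ has parity opposite to $t_1$; it does not transfer to $C_{2\ell+1}$. This parity clash on the $T$-fibers, not the seam in the $C$-fibers, is the real obstruction in the odd case.

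Consequently you have also misplaced the $\lceil\Delta/\ell\rceil$ term. In the paper's proof it is spent entirely on the $T$-fiber edges: one fixes a decomposition $D_1,\dots,D_{\sci(T)}$ of $E(T)$ into induced matchings and builds a fresh column-by-column colouring from a sequence of shifting $\Delta$-tuples $c_1,c_2,\dots$ drawn from a palette of $2\Delta+\lceil\Delta/\ell\rceil$ colours, arranged so that any two cyclically consecutive columns --- including the wrap-around pair $t_{2\ell+1},t_1$ --- assign different colours to the matchings $D_j$. The $C$-fiber edges are then partitioned into exactly \emph{five} induced matchings (one of them being $B_1\cup A_5\cup B_7$, which is induced because $2\ell+1\equiv 1$ or $3\pmod 4$, plus four classes bundled by residues mod $4$, with separate case analyses for $2\ell+1\equiv 1$ and $\equiv 3\pmod 4$). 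Your plan of five ``bulk'' classes plus $\lceil\Delta/\ell\rceil+1$ seam classes for the $C$-fibers is therefore aimed at the wrong target, and in one of your two accountings ($2\Delta+5+\lceil\Delta/\ell\rceil+1$) it overshoots the claimed bound by one. To repair the argument you would need to abandon the reuse of the Theorem~\ref{ub} colouring on the $T$-fibers and construct the rotating-palette colouring (or an equivalent device) that absorbs the odd-length defect there.
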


\begin{proof}
Let $a$ be a vertex of maximum degree in $T$, and set $C_{2\ell+1} = t_1t_2...t_{2\ell+1}t_1$.
We view $a$ as the root of $T$.
Let $D_1,D_2,...,D_{\sci(T)}$  be a partition (some of the $D_i$ may be empty) of the edges of $T$ into induced matchings.
We are going to define a strong edge coloring $c$ on  $T \Box C_{2\ell+1}$. 
First we give colors to the edges of all the $T$-fibers. 

For each $1 \leq i \leq \lfloor \Delta/2 \rfloor$, define the following two $\Delta$-tuples:
$c_{2i+1} := (\Delta+1,\Delta+2,...,\Delta+(i-1)\lceil \frac{\Delta}{\ell} \rceil, 2\Delta+1,2\Delta+2,...,2\Delta+\lceil \frac{\Delta}{\ell} \rceil, 
i\lceil \frac{\Delta}{\ell} \rceil+1,i\lceil \frac{\Delta}{\ell} \rceil+2,...,\Delta)$ and
$c_{2i} := (1,2,...,(i-1)\lceil \frac{\Delta}{\ell} \rceil, \Delta+(i-1)\lceil \frac{\Delta}{\ell} \rceil+1,...,2\Delta)$. That is the $j^{th}$ term of $c_{2i+1}$,
\begin{equation*}
	(c_{2i+1})_j =
\begin{cases}
	\Delta+j, 1 \leq j \leq (i-1)\lceil \frac{\Delta}{\ell} \rceil\\
	2\Delta+j-i+1, (i-1)\lceil \frac{\Delta}{\ell} \rceil < j \leq i \lceil \frac{\Delta}{\ell} \rceil\\
	j, i\lceil \frac{\Delta}{\ell} \rceil < j \leq \Delta\\
\end{cases}
\end{equation*}
and the $j^{th}$ term of $c_{2i}$, 
\begin{equation*}
	(c_{2i})_j =
	\begin{cases}
		j, 1 \leq j \leq (i-1)\lceil \frac{\Delta}{\ell} \rceil\\
		\Delta+j, (i-1)\lceil \frac{\Delta}{\ell} \rceil < j \leq \Delta\\
	\end{cases}
\end{equation*}

Note that $(c_{i+1})_{j'} \neq (c_i)_j$ and $(c_i)_j \neq (c_i)_{j'}$, for all $i,j,j'$.

Now for any vertex $u'$ and its parent $u$ in $T$, define

$$c((u:t_i,u':t_i)) :=
\begin{cases}
(c_i)_j, \ \ \ \,\text{ if } d_T(a,u) \text{ is even  and } (u,u') \in D_j\\
(c_{i+1})_j, \ \text{ if } d_T(a,u) \text{ is odd  and } (u,u') \in D_j\\
\end{cases}$$

We claim that each color class is an induced matching in $T \Box C_{2\ell+1}$.

Let $(u:t_i,u':t_i)$ and $(u:t_i,u'':t_i)$ be two adjacent edges.
Then, since every $D_i$ is an induced matching, both $(u,u')$ and $(u,u'')$ cannot be in the same $D_i$.
Hence $c((u:t_i,u':t_i)) \neq c((u:t_i,u'':t_i))$.
Now let $v:t_k$ be adjacent to $u:t_i$.
The definition of the Cartesian product says that either $i=k$ or $u=v$.
If $i=k$, then  $c((u:t_i,u':t_i)) \neq c((v:t_i,v':t_i))$, for any $v'$ (using  the properties of the sets $c_j$, and that the $D_i$ are induced matchings). 
If $i\neq k$, then $u=v$ and $t_i$ is adjacent to $t_j$.
Without loss of generality, let $k = i+1 \mod(2\ell+1)$.
 We know that $(c_i)_j \neq (c_{i+1})_{j'}$, for all $i,j,j'$. So $c((u:t_i,u':t_i)) \neq c((v:t_i,v':t_i))$.
 Therefore every color class is an induced matching.

The remaining (uncolored) edges are the edges of all the $C$-fibers. Therefore it is sufficient to partition them into five induced matchings. As in the proof of the earlier theorem, we partition them into the sets 
$A_i := \{ (u:t_i,u:t_{i+1})\mid  d_T(u,a)$ is even$\}$ and 
$B_i := \{ (u:t_i,u:t_{i+1})\mid  d_T(u,a)$ is odd$\}$, $i\in [2\ell+1]$.
It is easy to see that the following properties hold.
\begin{enumerate}[label=(\roman*)]
\item For every $i$, $A_i$ and $B_i$ are induced matchings.
\item Let $|i-j|>2 \mod(2\ell+1)$. Then $A_i \cup A_j$ and $B_i \cup B_j$ are induced matchings.
\item Let $t_i$ is not adjacent to $j$. Then $A_i \cup B_j$ and $A_j \cup B_i$ are induced matchings.
\end{enumerate}
 
Now $B_1 \cup A_5 \cup B_7$ is an induced matching because $2\ell+1 \equiv 1$ or $3\mod 4$.
Therefore it is enough to partition the rest of the edges into four induced matchings.
If $2\ell+1 \equiv 1 \mod 4$, then the sets
 $$A_1 \cup B_3 \cup (\bigcup \limits_{\substack{i > 6 \\ i\equiv 0 \mod 4}} A_i) \cup (\bigcup \limits_{\substack{i \geq 6 \\ i\equiv 2 \mod 4)}} B_i), \qquad
  (\bigcup \limits_{i\equiv 2 \mod 4} A_i) \cup (\bigcup \limits_{i\equiv 0 \mod 4} B_i),$$
$$B_2 \cup A_4 \cup (\bigcup \limits_{\substack{i \geq 7 \\ i\equiv 3 \mod 4}} A_i) \ \ \cup (\bigcup \limits_{\substack{i > 7 \\ i\equiv 1 \mod 4}} B_i), \qquad
A_3 \cup B_5 \cup  (\bigcup \limits_{\substack{i \geq 9 \\ i\equiv 1 \mod 4}} A_i) \cup (\bigcup \limits_{\substack{i > 9 \\ i\equiv 3 \mod 4}} B_i)$$
 partition $V(T) \times C_{2\ell+1}$ into induced matchings. 
Otherwise $2\ell+1 \equiv 3 \mod 4$, and then the sets
 $$B_2 \cup A_4 \cup (\bigcup \limits_{\substack{i > 6 \\ i\equiv 0 \mod 4}} A_i) \cup (\bigcup\limits _{\substack{i \geq 6 \\ i\equiv 2 \mod 4)}} B_i), \qquad
( \bigcup \limits_{i\equiv 2 \mod 4} A_i) \cup (\bigcup \limits_{i\equiv 0 \mod 4} B_i),$$ $$A_2 \cup B_4 \cup (\bigcup \limits_{\substack{i \geq 7 \\ i\equiv 3 \mod 4}} A_i) \cup (\bigcup \limits_{\substack{i > 7 \\ i\equiv 1 \mod 4}} B_i), \qquad
A_3 \cup B_5 \cup (\bigcup \limits_{\substack{i \geq 9 \\ i\equiv 1 \mod 4}} A_i) \cup (\bigcup \limits_{\substack{i > 9 \\ i\equiv 3 \mod 4}} B_i)$$
 form a required partition.

Therefore the strong chromatic index of $T \Box C_{2\ell+1}$  is at most $2\Delta+\lceil \frac{\Delta}{\ell} \rceil +5$, for $\ell>3$.
%
%
%
\end{proof}

Above we obtained bounds for the strong chromatic index of a tree with a cycle of length different from $3,5$ and $7$. Now we find a slightly larger upper bound for the same when a cycle is of length $3,5$ and $7$.

\begin{theorem}   \label{t:shortodd}
Let $\ell \leq 3$. Then $\sci(T \Box C_{2\ell+1}) \leq  2\Delta(T)+\lceil \frac{\Delta(T)}{\ell} \rceil +6$.
\end{theorem}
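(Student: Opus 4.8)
The plan is to mimic the proof of Theorem~\ref{t:longodd}, with the short cycles $C_3,C_5,C_7$ handled uniformly by the same two-stage scheme: first color all $T$-fibers using $2\Delta+\lceil\Delta/\ell\rceil$ colors via the tuples $c_i$ constructed there, then color all $C$-fibers with a bounded number of extra colors. The only reason we cannot re-use the $5$-color bound from Theorem~\ref{t:longodd} is that its partition of the $C$-fiber edges into five induced matchings relied on having at least four "free" indices ($\ell>3$) so that the displayed unions $A_2\cup B_4$, $\bigcup A_i\cup\bigcup B_i$, etc.\ really are induced matchings modulo $2\ell+1$. When $2\ell+1\in\{3,5,7\}$ the residues $\bmod 4$ wrap around and the distance condition $|i-j|>2\pmod{2\ell+1}$ fails for too many pairs, so one extra color is needed for the $C$-fiber part: six instead of five.

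First I would reproduce verbatim the $T$-fiber coloring from Theorem~\ref{t:longodd}: fix a maximum-degree root $a$, take an edge partition $D_1,\dots,D_{\sci(T)}$ of $T$ into induced matchings, define the $\Delta$-tuples $c_{2i},c_{2i+1}$ exactly as there, and set $c((u:t_i,u':t_i))$ to $(c_i)_j$ or $(c_{i+1})_j$ according to the parity of $d_T(a,u)$ and which $D_j$ contains $(u,u')$. The verification that every color class among colors $1,\dots,2\Delta+\lceil\Delta/\ell\rceil$ is an induced matching is identical and I would just cite it. Then the uncolored edges are exactly the $C$-fiber edges, partitioned into $A_i=\{(u:t_i,u:t_{i+1}): d_T(u,a)\text{ even}\}$ and $B_i=\{(u:t_i,u:t_{i+1}): d_T(u,a)\text{ odd}\}$ for $i\in[2\ell+1]$, and the same three elementary properties hold (each $A_i,B_i$ is an induced matching; $A_i\cup A_j$, $B_i\cup B_j$ are induced matchings when $|i-j|>2\pmod{2\ell+1}$; $A_i\cup B_j$, $A_j\cup B_i$ are induced matchings when $t_i\not\sim t_j$). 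It then suffices to partition $\{A_i,B_i\}$ into six induced matchings.

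The core of the proof is therefore a small finite case analysis: for each of $2\ell+1=3$ (so $C_3$: edges $A_1,B_1,A_2,B_2,A_3,B_3$), $2\ell+1=5$ ($C_5$: $A_1,\dots,A_5,B_1,\dots,B_5$), and $2\ell+1=7$ ($C_7$), exhibit an explicit grouping of the $2(2\ell+1)$ pieces into at most six classes, each class being a union that the three properties above certify as an induced matching. For $C_3$ the consecutive-index obstruction forces $A_1,A_2,A_3$ pairwise into distinct classes (any two of $t_1,t_2,t_3$ are adjacent and consecutive), and likewise $B_1,B_2,B_3$; but one can pair $A_i$ with $B_j$ whenever $t_i\not\sim t_j$ — in $C_3$ there is no such pair, so $C_3$ genuinely needs six classes $A_1,A_2,A_3,B_1,B_2,B_3$, matching $2\Delta+\lceil\Delta/\ell\rceil+6$ exactly with $\ell=1$. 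For $C_5$ and $C_7$ I would write down groupings analogous to the displayed ones in Theorem~\ref{t:longodd} but starting the $A_i\cup B_j$ pairings from a safe index and spending one more color to absorb the wrap-around leftover; e.g.\ for $C_7$ something like $A_1\cup B_4$, $A_2\cup B_5$, $A_3\cup B_6$, $A_4\cup B_7$, $A_5\cup B_1$, and $A_6\cup A_7\cup B_2\cup B_3$ (checking each against the properties), which uses six. I expect the main obstacle to be purely bookkeeping: verifying for $C_5,C_7$ that the chosen six unions really satisfy the pairwise-distance and non-adjacency conditions modulo $2\ell+1$ at the wrap-around, and confirming that six (not five) is unavoidable for $C_3$ so the bound is the right one to state. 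Everything else transfers mechanically from Theorems~\ref{t:longodd} and~\ref{ub}.
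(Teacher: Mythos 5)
Your overall strategy coincides with the paper's: reuse the $T$-fiber coloring of Theorem~\ref{t:longodd} with $2\Delta+\lceil\Delta/\ell\rceil$ colors, then partition the $C$-fiber edges (the sets $A_i$, $B_i$) into six induced matchings, and your treatment of $C_3$ (six singleton classes $A_1,A_2,A_3,B_1,B_2,B_3$) is exactly what the paper does. However, the one concrete partition you exhibit for a nontrivial case is invalid, and this partition is the actual content of the proof for $\ell=2,3$. In your $C_7$ grouping the sixth class $A_6\cup A_7\cup B_2\cup B_3$ is not even a matching: for a fixed $u$ of even distance from $a$, the edges $(u:t_6,u:t_7)\in A_6$ and $(u:t_7,u:t_1)\in A_7$ share the vertex $u:t_7$, and likewise $B_2$ and $B_3$ share vertices $u:t_3$. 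Consecutive indices always fail; the usable criteria are cyclic distance $|i-j|>2$ for $A_i\cup A_j$ (or $B_i\cup B_j$) and $t_i\not\sim t_j$, $t_i\neq t_j$ for $A_i\cup B_j$. Moreover, your template of ``five classes of the form $A_i\cup B_{\sigma(i)}$ plus one leftover class'' cannot be repaired within six classes for $C_7$: there are $14$ sets, so some class must contain at least three of them, and it must be assembled so that every pair inside it meets one of the two criteria.

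The paper resolves this with classes of size three built from the $A_i\cup B_j$ pairings at index gap $2$ combined with a same-letter set at cyclic distance $\geq 3$: for $C_5$ it takes $A_1\cup B_3$, $B_1\cup A_3$, $A_2\cup B_4$, $B_2\cup A_4$, $A_5$, $B_5$; for $C_7$ it takes $A_1\cup B_3\cup A_5$, $B_1\cup A_3\cup B_5$, $A_2\cup B_4\cup A_6$, $B_2\cup A_4\cup B_6$, $A_7$, $B_7$. Each pair within these unions satisfies one of the two criteria, so the bound $2\Delta+\lceil\Delta/\ell\rceil+6$ follows. Your write-up correctly identifies everything that transfers from Theorem~\ref{t:longodd}, but the finite case analysis you defer to ``bookkeeping'' is precisely where the work lies, and the sample answer you give for it does not pass that bookkeeping; you also give no partition at all for $C_5$. (A side remark: proving that six classes are unavoidable for $C_3$ is unnecessary, since the theorem asserts only an upper bound.)
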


\begin{proof}
Let $a$ be an arbitrary vertex of $T$.
We can assign the colors $1,2,3,...,$ $2\Delta+\lceil \frac{\Delta}{\ell} \rceil$ to the edges of $T \times V(C_{2\ell+1})$ in the same way as in the proof of the previous theorem. The only edges which remain to be colored are the edges of $V(T) \times C_{2\ell+1}$.

Define $A_i$ and $B_i$ as in the preceding proof, for $i \leq 2\ell+1$. 
We have already shown above that the sets $A_i$ and $B_i$ are induced matchings.
Hence $\sci(T \Box C_3) \leq 3\Delta+6$.
Now further properties of these $A_i$ and $B_i$ (as mentioned above) ensure that the following properties are true.

If $\ell =2$, then $A_1 \cup B_3$, $B_1 \cup A_3$, $A_2 \cup B_4$, $B_2 \cup A_4$, $A_5$, and $B_5$ are induced matchings;
and if $\ell =3$, then $A_1 \cup B_3 \cup A_5$, $B_1 \cup A_3 \cup B_5$, $A_2 \cup B_4 \cup A_6$, $B_2 \cup A_4 \cup B_6$, $A_7$, and $B_7$ are induced matchings. 
Therefore $\sci(T \Box C_{2\ell+1}) \leq  
2\Delta+\lceil \frac{\Delta}{\ell} \rceil +6$, for $\ell \leq 3$.
\end{proof}

Finally, we give a better lower bound for the same when the cycle is of odd length.
For that, first we define jellyfish graphs.
Let $H$ be a graph obtained from $C_k$ by adding $p_v$ new pendant vertices adjacent to $v$, for each vertex $v$ in $C_k$. Then $H$ is called a $C_k$-{\it jellyfish}.
On such graphs, Chang et al. \cite{chang2015strong} proved a general lower bound (their Theorem $13$), which implies the following formula.

\begin{theorem}\cite{chang2015strong}
 If $G$ is a $C_k$-jellyfish of $m$ edges, such that $k$ is odd and all vertices of $C_k$ have the same degree in $G$, then 
 $\sci(G) \geq \lceil \frac{m}{\lfloor k/2 \rfloor}\rceil$.
\end{theorem}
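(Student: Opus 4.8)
The plan is to derive the bound from a counting argument over the color classes of an optimal strong edge coloring of a $C_k$-jellyfish $G$. Write $k=2t+1$ so that $\lfloor k/2\rfloor = t$, and let $r$ be the common degree in $G$ of the vertices of the cycle $C_k$; thus each cycle vertex $v$ carries two cycle-edges plus $p_v = r-2$ pendant edges, and $m = kr - k = k(r-1)$ counting edges of $G$ (cycle edges counted once). The key structural observation is that \emph{every} edge of $G$ is incident to a cycle vertex, and two edges at cycle vertices $u$ and $v$ can receive the same color only if $u$ and $v$ are at distance at least $3$ along $C_k$ (for $u\neq v$), since otherwise the two edges are either adjacent or joined by a cycle edge. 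Hence, reading the cycle vertices cyclically, the set of cycle vertices that "host" a given color class forms an independent set in the cube $C_k^{2}$... more precisely a set of vertices pairwise at cyclic distance $\geq 3$.

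The main step is to bound, for a single color class $M$ (an induced matching), the number of edges of $G$ it can contain. If $M$ has $q$ edges, they are hosted at a set $S$ of at most $q$ cycle vertices (an edge may be hosted at one or two cycle vertices; a cycle edge is hosted at two consecutive vertices, but then no other edge of $M$ can touch either endpoint). The condition that distinct hosting vertices are at cyclic distance $\geq 3$ on $C_k$ forces $|S| \leq \lfloor k/3\rfloor$ in general, but that is not tight enough; instead I would argue more carefully that the number of edges of $G$ in any induced matching is at most $\lfloor k/2 \rfloor = t$. To see this, observe that the edges of $M$, restricted to their incidences with the cycle, occupy vertices of $C_k$ no two of which are adjacent \emph{and} such that we never have three cycle vertices $v-1,v,v+1$ all used; packing such "slots" around a cycle of length $k=2t+1$ allows at most $t$ of them. (This is exactly the extremal configuration realizing Chang et al.'s Theorem 13: alternate used/unused vertices around the odd cycle, which fits $t$ independent vertices but not $t+1$, and each contributes at most one edge to the matching.)

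Once the per-class bound $|M\cap E(G)| \leq \lfloor k/2\rfloor$ is established, the theorem follows immediately: if $\sci(G)=N$ and $E(G)$ is partitioned into $N$ induced matchings $M_1,\dots,M_N$, then
$$ m = \sum_{i=1}^{N} |M_i \cap E(G)| \leq N\left\lfloor \tfrac{k}{2}\right\rfloor, $$
so $N \geq m/\lfloor k/2\rfloor$, and since $N$ is an integer, $N \geq \lceil m/\lfloor k/2\rfloor\rceil$, as claimed. Of course, the cleanest route is simply to invoke Chang et al.'s general Theorem 13 and specialize the parameters; but if one wants a self-contained argument, the counting above is the skeleton.

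\textbf{Main obstacle.} The delicate point is the per-class upper bound $\lfloor k/2\rfloor$: one must rule out an induced matching that uses more than $t$ edges by exploiting both incident cycle-edges of a cycle vertex and the parity of the odd cycle. A naive distance-$\geq 3$ argument only gives $\lfloor k/3\rfloor$, which is too weak; the improvement to $\lfloor k/2\rfloor$ comes from noticing that whenever a color appears on a pendant edge at $v$ (using up only vertex $v$ on the cycle), the \emph{neighboring} cycle vertices $v\pm 1$ are still forbidden from hosting the same color — because the cycle edges $(v-1,v)$ and $(v,v+1)$ witness adjacency/joining — so each class still blocks two consecutive cycle vertices per used vertex, yielding the alternating pattern and the bound $t$. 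Handling the case where a cycle edge itself lies in the matching (it then blocks both its endpoints' neighborhoods, even more restrictively) is a minor sub-case. I expect the rest to be routine bookkeeping with floors and ceilings.
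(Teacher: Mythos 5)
The paper itself offers no proof of this statement: it is quoted from Chang et al.\ \cite{chang2015strong} as a consequence of their Theorem~13, so there is no in-paper argument to measure yours against. Your self-contained proof is essentially correct, and its core is the right one: every edge of a $C_k$-jellyfish meets the cycle; the cycle vertices met by the edges of one induced matching form pairwise disjoint, pairwise non-adjacent blocks (a singleton for a pendant edge, an adjacent pair for a cycle edge); and packing such blocks with at least one free vertex between consecutive blocks around $C_{2t+1}$ allows at most $t=\lfloor k/2\rfloor$ of them. Dividing $m$ by this per-class bound and rounding up gives the theorem. Two expository slips should be fixed, though neither infects the final argument. First, your opening claim that distinct host vertices must be at cyclic distance at least $3$ is false: two pendant edges at cycle vertices at distance $2$ do form an induced matching (no edge of $G$ joins them), and it is exactly these distance-$2$ pairs that realize the extremal alternating pattern of size $t$. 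Second, the remark that the distance-$\geq 3$ bound $\lfloor k/3\rfloor$ is ``too weak'' is backwards: as an upper bound on the size of a colour class it would be \emph{stronger} than $\lfloor k/2\rfloor$ and would give a better lower bound on $\sci$ --- the problem is that it is simply not true. The argument you settle on (the occupied cycle vertices form an independent set in the odd cycle, $\alpha(C_{2t+1})=t$, each edge of the matching occupies at least one of them and distinct edges occupy disjoint sets, with the cycle-edge sub-case handled by the block-plus-gap count) is sound. Note also that, as your proof makes clear, neither the oddness of $k$ nor the equal-degree hypothesis is actually needed for this particular inequality; they are artifacts of the more general statement in \cite{chang2015strong}.
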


By using this theorem, we can derive an improved lower bound for the strong chromatic index of the product of a tree with an odd cycle.

\begin{corollary}   \label{cor:LB-odd}
Let $T$ be a tree. Then $\sci(T \Box C_{2\ell+1}) \geq \lceil \frac{(2\ell+1)(\Delta(T)+1)}{\ell}\rceil
$.
\end{corollary}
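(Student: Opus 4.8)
The plan is to locate, inside $T \Box C_{2\ell+1}$, a (not necessarily induced) subgraph that is literally a $C_{2\ell+1}$-jellyfish with as many pendant vertices per cycle vertex as possible, and then combine the jellyfish bound of Chang et al.\ with the monotonicity of $\sci$ under taking subgraphs. Concretely, I would fix a vertex $a$ of $T$ with $\deg_T(a)=\Delta$ and list its neighbours $b_1,\dots,b_\Delta$ (the case $T=K_1$, i.e.\ $\Delta=0$, degenerates to $\sci(C_{2\ell+1})\ge\lceil(2\ell+1)/\ell\rceil$, itself an instance of the jellyfish bound). Writing $C_{2\ell+1}=t_1t_2\cdots t_{2\ell+1}t_1$, let $H$ be the subgraph of $T\Box C_{2\ell+1}$ with vertex set $\{a:t_i\}\cup\{b_j:t_i : 1\le j\le\Delta\}$ and edge set consisting of the $2\ell+1$ edges $(a:t_i,a:t_{i+1})$ of the $a$-fiber together with the $\Delta(2\ell+1)$ edges $(a:t_i,b_j:t_i)$.

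First I would check that $H$ is genuinely a $C_{2\ell+1}$-jellyfish: the listed vertices are pairwise distinct, every listed edge really is an edge of $T\Box C_{2\ell+1}$, and in $H$ the vertex $a:t_i$ has the $\Delta+2$ neighbours $a:t_{i\pm1},b_1:t_i,\dots,b_\Delta:t_i$ while each $b_j:t_i$ has exactly the one neighbour $a:t_i$; thus $H$ is a $C_{2\ell+1}$-jellyfish in which all cycle vertices have the common degree $\Delta+2$, and it has $m=(2\ell+1)+\Delta(2\ell+1)=(2\ell+1)(\Delta+1)$ edges. Applying the jellyfish theorem quoted above with $k=2\ell+1$ then yields $\sci(H)\ge\lceil m/\lfloor k/2\rfloor\rceil=\lceil(2\ell+1)(\Delta+1)/\ell\rceil$. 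To transfer this to $T\Box C_{2\ell+1}$ I would use that the restriction of any strong edge coloring of $T\Box C_{2\ell+1}$ to $E(H)$ is again a strong edge coloring of $H$ — two edges of $H$ that share a vertex, or are joined by an edge of $H$, have the same property inside $T\Box C_{2\ell+1}$ — so $\sci(T\Box C_{2\ell+1})\ge\sci(H)$, which is the claimed inequality.

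I do not expect a serious obstacle here; the one thing to be careful about is that $H$ is \emph{not} an induced subgraph (the vertices $b_j:t_i$ and $b_j:t_{i+1}$ are adjacent in $T\Box C_{2\ell+1}$), so one must be explicit that $H$ is taken with its edge set restricted to the cycle edges and the $\Delta$ radial edges at each position, and that this leaves every $b_j:t_i$ of degree $1$ — which is automatic once one notes that $b_j:t_i$ is adjacent in $T\Box C_{2\ell+1}$ to no cycle vertex other than $a:t_i$. Equivalently, and perhaps closer to the phrasing ``implied by Chang et al.'s Theorem $13$'', one can avoid naming $H$ and argue directly that every induced matching of $T\Box C_{2\ell+1}$ uses at most $\ell$ of the $(2\ell+1)(\Delta+1)$ edges incident with the $a$-fiber (the indices of the cycle vertices met by such a matching, read in $C_{2\ell+1}$, occupy pairwise separated arcs of length $1$ or $2$, which forces at most $\ell$ of them), so each colour class of a strong edge coloring contains at most $\ell$ of those edges and at least $\lceil(2\ell+1)(\Delta+1)/\ell\rceil$ colours are needed.
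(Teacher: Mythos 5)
Your proposal is correct and follows essentially the same route as the paper: the paper's proof also takes a maximum-degree vertex $u$, observes that $T \Box C_{2\ell+1}$ contains (inside $G[N(u)\cup\{u\}]\Box C_{2\ell+1}$) a $C_{2\ell+1}$-jellyfish in which every cycle vertex has degree $\Delta+2$, and applies the quoted theorem of Chang et al.\ with $m=(2\ell+1)(\Delta+1)$ and $\lfloor k/2\rfloor=\ell$. Your added care about the subgraph being non-induced and the monotonicity of $\sci$ under (not necessarily induced) subgraphs is a correct and worthwhile elaboration of a step the paper leaves implicit.
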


\begin{proof}
 Let $u$ be a vertex of maximum degree in $T$, and set $C_{2\ell+1} = t_1t_2...t_{2\ell+1}t_1$.
 Now $T \Box C_{2\ell+1}$ contains $G[N(u) \cup \{u\}] \Box C_{2\ell+1}$, which contains a $(\Delta+2)$-regular 
 $C_{2\ell+1}$-jellyfish. Therefore $\sci(T \Box C_{2\ell+1}) \geq \lceil \frac{(2\ell+1)(\Delta+1)}{\ell}\rceil$.
\end{proof}

\begin{remark}
Note that the difference of the lower and the upper bound for the strong chromatic index of a tree with an odd cycle is at most $4$. In fact, if the odd cycle is of length at least $9$, then the difference is at most $3$.
\end{remark}

\begin{proof}
	\begin{align*}
		\sci(T \Box C_{2\ell+1}) & \geq \lceil \frac{(2\ell+1)(\Delta+1)}{\ell}\rceil \\
		& = \lceil 2\ell \frac{\Delta(T)+1}{\ell} + \frac{\Delta(T)+1}{\ell} \rceil \\
		& \geq 2\Delta +2 +\lceil \frac{\Delta(T)}{\ell} \rceil.
	\end{align*}
\end{proof}	
	 
\section{Conclusion}

Table \ref{tab:cycle} summarizes the results which involve cycles.

\noindent
\renewcommand{\arraystretch}{1.2}
  \begin{table}[htp]
	\begin{center}%
\begin{tabular}
[c]{|c||c|c||c|c|}\hline
cycle length $k=$ & upper bound & theorem & lower bound & result \\ \hline\hline
$4\ell$ & $2\Delta+4$ & \ref{tree-tree} & $2\Delta+4$ & Lemma \ref{cycle-lower} \\
6 & $2\Delta+6$ & \ref{t:cycle6} & & \\
$4\ell+2\geq 10$ & $2\Delta+5$ & \ref{largeevencycle} & & \\ \hline
$2\ell+1\leq 7$ & $2\Delta+\lceil \frac{\Delta}{\ell} \rceil+6$ & \ref{t:shortodd} & $2\Delta+\lceil \frac{\Delta+1}{\ell} \rceil+1$ & Corollary \ref{cor:LB-odd} \\
$2\ell+1\geq 9$ & $2\Delta+\lceil \frac{\Delta}{\ell} \rceil+5$ & \ref{t:longodd} & & \\ 
 \hline
\end{tabular}
 \caption{Upper and lower bounds on $\sci(T \Box C_{k})$ for all trees $T$, where $\Delta:=\Delta(T)$.
   \label{tab:cycle}}
\end{center}
  \end{table}

Theorem \ref{t:shortodd} and Corollary \ref{cor:LB-odd} gives the upper and lower bound for the strong chromatic index of any tree with product with $C_3$. The strong chromatic index of $K_2 \square C_3$ and $P_3 \square C_3$ are $9$ and $9$ respectively. Therefore we can not improve the bounds for the strong chromatic index of product of a tree with $C_3$. For other odd cycles this bounds might be improved. We have already discussed the tightness of bounds for the strong chromatic index of product of trees with even cycles.

{\bf Acknowledgment} The fourth author thanks for the support given in part by the Sz\'echenyi 2020 programme under the project No.\ EFOP-3.6.1-16-2016-00015, and by the National Research, Development and Innovation Office -- NKFIH under the grant SNN 129364.

\bibliographystyle{plain}

\end{document}